\title{Limits and Colimits in a Category of Lenses}
\author{
Emma Chollet
\institute{ETH Z\"urich\\
Z\"urich, Switzerland}
\email{emma.chollet@eawag.ch}
\and
Bryce Clarke
\institute{Macquarie University\\
Sydney, Australia}
\email{bryce.clarke1@hdr.mq.edu.au}
\and
Michael Johnson
\institute{Macquarie University\\
Sydney, Australia}
\email{mike@ics.mq.edu.au} 
\and
Maurine Songa
\institute{
University of KwaZulu-Natal\\
Durban, South Africa}
\email{maurine@aims.ac.za}
\and 
Vincent Wang
\institute{University of Oxford\\
Oxford, UK}
\email{vincent.wang@cs.ox.ac.uk}
\and
Gioele Zardini
\institute{ETH Z\"urich\\ 
Z\"urich, Switzerland}
\email{gzardini@ethz.ch}
}
\newsavebox{\measure@tikzpicture}
  \def\tikz@width{#1}%
  \def\tikzscale{1}\begin{lrbox}{\measure@tikzpicture}%
  \edef\tikzscale{\pgfmathresult}%
\newtheorem{theorem}{Theorem}[section]
\newtheorem{proposition}[theorem]{Proposition}
\newtheorem{lemma}[theorem]{Lemma}
\newtheorem{corollary}[theorem]{Corollary}
\newtheorem*{lemmaA}{Lemma A} 
\newtheorem*{lemmaB}{Lemma B} 
\theoremstyle{definition} 
\newtheorem{definition}[theorem]{Definition}
\newtheorem{example}[theorem]{Example}
\theoremstyle{remark}
\newtheorem*{remark}{Remark}
\newcommand{\Cat}{\mathrm{\mathcal{C}at}}
\newcommand{\Lens}{\mathrm{\mathcal{L}ens}}
\newcommand{\Dopf}{\mathrm{\mathcal{D}opf}}
\newcommand{\Bool}{\mathrm{\mathcal{B}ool}}
\newcommand{\Span}{\mathsf{Span}} 
\newcommand{\op}{\mathrm{op}}
\newcommand{\phibar}{\overline{\varphi}}
\newcommand{\gammabar}{\overline{\gamma}}
\newcommand{\br}{\rightleftharpoons}
\newcommand{\U}{\mathcal{U}} 
\newcommand{\D}{\mathcal{D}}
\DeclareMathOperator{\Ob}{Ob}
\newcommand{\cheap}{\mathrm{cheap}}
\newcommand{\expensive}{\mathrm{expensive}}
\newcommand{\slow}{\mathrm{slow}}
\newcommand{\average}{\mathrm{average}}
\newcommand{\fast}{\mathrm{fast}}
\newcommand{\false}{\texttt{false}}
\newcommand{\true}{\texttt{true}}
\begin{document}
\maketitle

\begin{abstract}
Lenses are an important tool in applied category theory. 
While individual lenses have been widely used in applications, many of 
the mathematical properties of the corresponding categories of lenses
have remained unknown. 
In this paper, we study the category of small categories and asymmetric 
delta lenses, and prove that it has several good exactness properties.
These properties include the existence of certain limits and colimits, 
as well as so-called imported limits, such as imported 
products and imported pullbacks, which have arisen previously in
applications.  
The category is also shown to be extensive, and it has an 
image factorisation system. 
\end{abstract}

\section{Introduction}

Lenses, and their use for synchronising systems, have been an important 
tool in applied category theory dating back to even before the term 
``Applied Category Theory'' was first used in its modern form.
Lenses were introduced by Pierce and Schmitt in 2003 under that name 
\cite{PS03}, but under other names lenses were an important part of the 
database view updating work of the 1980s. 
The full axiomatic description of what are now called 
\emph{very well behaved set-based lenses} first appeared in a study of 
storage management in the thesis of Oles \cite{Ole82}.
Since that time many different flavours of lenses have been introduced, 
and a very wide variety of applications have been found.

The first lenses were \emph{asymmetric lenses}, so called to emphasise that 
when they were used to maintain consistency between two systems, one of 
the systems had all the information required to reproduce the entire 
state of the other system (as in a database and its views).  
However, many real-world synchronisation problems are more symmetric in 
that each system has state that cannot be derived from the other. 
From the beginning of the study of such symmetric systems it was 
recognised that symmetric lenses could be built from asymmetric lenses, 
so the mathematical study of asymmetric lenses has remained central to 
the subject.

The set-based asymmetric lenses were soon seen to be a special case of 
a more general, and more useful, notion called \emph{delta lenses} 
\cite{DXC11}, which might also be described as \emph{category-based lenses}. 
The original set-based lenses are the special case where the categories 
in question are codiscrete \cite{JR16}. 
These asymmetric category-based lenses were seen to unify a wide range of 
lenses and their applications, and they are the subject of study in 
this paper.

Another distinction among lenses worthy of note has sometimes been 
described as the lawful versus the lawless lenses. 
It often happens in engineering that systems are designed with axioms 
or assertions or other rules of well-definedness in mind, but the major 
engineering job is to build the infrastructure which can support those 
systems, and that infrastructure may, or may not, enforce the axioms 
--- it is quite common to leave the questions of validity with respect 
to axioms or assertions to the user.  
Thus we have the \emph{lawless lenses}, those which have the lens 
operations, usually called Put and Get, but with few or no requirements 
about how those operations interact with each other or with data. 
In fact these lawless lenses have come to be seen as important in a 
range of applications of their own including economics, game theory and 
machine learning. 
Nevertheless, the \emph{lawful lenses}, those that are required to satisfy 
the basic axioms originally proposed, axioms which are seen here to 
correspond to various types of functoriality and fibering, remain the 
principal object of mathematical study, and are the lenses analysed 
in this paper.

When we say \emph{lens} in this paper we will mean 
\emph{lawful category-based asymmetric lens}.

The urgency of the applications of lenses has meant over the years 
that much of the work has focused on individual lenses as needed. 
Of course it was recognised early that lenses compose, associatively 
and with identities, and so form a category called $\Lens$, whose 
objects are small categories and whose arrows are lenses. 
But that category has, until this paper, been little studied, and 
its properties were only hinted at in earlier work. 
One of those properties caught the attention of early 
workers, and is an important motivation for this paper.

We have already noted that symmetric lenses can be studied via 
asymmetric lenses: a \emph{symmetric lens} is an equivalence class of spans 
of asymmetric lenses. 
So one might expect that the well-understood theory of spans in a 
category would apply, and would support the study of the (bi)category 
of symmetric lenses as $\Span(\Lens)$.
That theory depends on using pullbacks to compose spans, so the 
obvious first step was to construct pullbacks in $\Lens$. 
Attempts to do this seemed straightforward:  one can calculate the 
pullback of the lenses' Get functors in $\Cat$, and it is easy to find a 
canonical construction of Put operations on the resultant projections which 
satisfy all the required axioms. 
Thus one has a ``pullback'' in $\Lens$, but the quotation marks are 
there because it soon became apparent that most of the ``pullbacks'' 
were not pullbacks in $\Lens$ at all --- they did not satisfy the 
required universal property with respect to lenses. 
Nevertheless, and somewhat surprisingly, these ``pullbacks'' did 
exhibit many of the properties of pullbacks and in fact did everything 
required to support the imagined theory of symmetric lenses \cite{JR17}. 
In some sense one could ``import'' pullbacks from $\Cat$ into $\Lens$ 
by adding canonical Put operations, and the imported-pullbacks would behave 
sufficiently like real pullbacks to develop the required theory.

In our view, it is time to seriously study the categorical properties 
of the category $\Lens$. 
This paper begins that study, exploring in $\Lens$ imported pullbacks 
and real pullbacks, imported products and real products, equalisers, 
coproducts, extensivity, and a surprisingly simple proper orthogonal 
factorisation system. 
Each of these notions has important practical applications, and 
understanding the categorical nature of $\Lens$, including various 
imported exactness properties, is an important step in advancing 
applied category theory using lenses.

\subsection*{Acknowledgements}
This paper arose from the ACT2020 Adjoint School through research by the 
Maintainable Relations group. 
We are grateful to the organisers of the school for their support.
We have benefited from valuable conversations with a number of 
colleagues in the School and in our home and other institutions.
We particularly mention Chris Heunen, who asked a number of questions 
that are now answered by this paper. 
We also extend our gratitude to the anonymous referees for their 
helpful feedback on this paper. 
  
Bryce Clarke is grateful for the support of the Australian Government 
Research Training Program Scholarship.
The work of Michael Johnson is supported in part by the Australian 
Research Council.
Gioele Zardini is supported by the Swiss National Science Foundation 
under NCCR Automation, grant agreement \texttt{51NF40\_180545}, 
and he would like to thank Emilio Frazzoli for support.

\section{Background}

In this section, we recall the category $\Lens$ of small categories and 
(delta) lenses \cite{DXC11}, and establish notation for the rest of the 
paper. 
The only new result presented here is Lemma~\ref{lemma:division}(ii). 

\begin{definition}
\label{definition:delta-lens}
Let $A$ and $B$ be categories. 
A (\emph{delta}) \emph{lens} $(f, \varphi) \colon A \br B$ consists of 
a functor $f \colon A \rightarrow B$ together with a 
\emph{lifting operation}, 
\begin{equation*}
    (a \in A, u \colon fa \rightarrow b \in B) 
    \quad \longmapsto \quad 
    \varphi(a, u) \colon a \rightarrow a' \in A
\end{equation*}
which satisfies the following axioms:
\begin{enumerate}[(1)]
    \item $f\varphi(a, u) = u$
    \item $\varphi(a, 1_{fa}) = 1_{a}$
    \item $\varphi(a, v \circ u) = \varphi(a', v) \circ \varphi(a, u)$
\end{enumerate}
\end{definition}

\begin{remark}
In the literature, the functor part of a lens is often called the 
Get, while the lifting operation is called the Put. 
The three axioms are also called Put-Get, Get-Put, 
and Put-Put, respectively. 
This terminology can be confusing and distracts from the mathematics, 
so will be avoided in this paper. 
\end{remark}

\begin{example}
A \emph{split opfibration} is a lens whose chosen lifts $\varphi(a, u)$ 
are opcartesian. 
\end{example}

\begin{definition}
\label{definition:Lens-category}
Let $\Lens$ denote the category whose objects are (small) categories 
and whose morphisms are lenses. 
Given a pair of lenses 
$(f, \varphi) \colon A \br B$ and $(g, \gamma) \colon B \br C$, 
their composite is given by the functor 
$g \circ f \colon A \rightarrow C$ together the lifting operation: 
\begin{equation*}
	(a \in A, u \colon gfa \rightarrow c \in C) 
	\quad \longmapsto \quad 
	\varphi(a, \gamma(fa, u) )
\end{equation*}
The identity lens on a category $A$ consists of the identity 
functor $1_{A} \colon A \rightarrow A$ together with the trivial 
lifting operation given by projection 
$\pi(a, u \colon a \rightarrow a') = u$. 
\end{definition}

There is an identity-on-objects, forgetful functor 
$\U \colon \Lens \rightarrow \Cat$ which assigns a lens to its 
underlying functor. 
The functor $\U$ is neither \emph{full}, as not every functor can 
be given a lifting operation, nor \emph{faithful}, as a functor 
may have many possible lifting operations; however it is an 
\emph{isofibration}. 
Despite $\U$ failing to be full or faithful, 
there is a large class of functors for which there does exist a 
unique lifting operation, called discrete opfibrations, that play 
a special role in the theory of lenses. 

\begin{definition}
\label{definition:discrete-opfibration}
A functor $f \colon A \rightarrow B$ is a 
\emph{discrete opfibration} if for all pairs 
$(a \in A, u \colon fa \rightarrow b \in B)$ there exists a 
unique morphism $w \colon a \rightarrow a'$ in $A$ 
such that $fw = u$. 
A \emph{cosieve} is an injective-on-objects discrete opfibration 
(equivalently, fully faithful discrete opfibration). 
\end{definition}

Discrete opfibrations are equivalent to lenses whose lifting 
operation is an isomorphism. 
Let $\Dopf$ denote the wide subcategory of $\Cat$ whose morphisms 
are discrete opfibrations. 
Discrete opfibrations are also stable under pullback along 
arbitrary functors. 
The following result, due to Clarke \cite{Cla20b}, 
establishes the importance of discrete opfibrations for 
understanding lenses. 

\begin{proposition}
\label{proposition:triangle-rep}
Every lens $(f, \varphi) \colon A \br B$ may be represented as 
a commutative diagram of functors,
\begin{equation}
\label{equation:triangle-rep}
\begin{tikzcd}[column sep=small, row sep=small]
& X
\arrow[ld, "\varphi"']
\arrow[rd, "\phibar"]
& \\
A 
\arrow[rr, "f"'] 
& & 
B
\end{tikzcd}
\end{equation}
where $\varphi$ is a faithful, bijective-on-objects functor and 
$\phibar$ is a discrete opfibration. 
\end{proposition}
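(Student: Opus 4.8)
The plan is to manufacture the apex $X$ directly out of the chosen lifts, so that $\phibar$ becomes a discrete opfibration tautologically and all the real work is pushed into the axioms. I would set $\Ob(X) := \Ob(A)$ and let both $\varphi$ and $\phibar$ act as $f$ does on objects, so that bijectivity-on-objects of $\varphi$ and commutativity of the triangle on objects hold by construction. A morphism of $X$ with source $a$ I would take to be a pair $(a, u)$ with $u \colon fa \to b$ a morphism of $B$, its target declared to be the target $a'$ of the chosen lift $\varphi(a, u) \colon a \to a'$; equivalently, the morphisms of $X$ out of $a$ are, by fiat, in bijection with the morphisms of $B$ out of $fa$. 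Here axiom~(1) supplies $fa' = b$, so that for composable pairs $(a, u)$ and $(a', v)$ the composite $v \circ u$ in $B$ is well-typed, and I would set $(a', v) \circ (a, u) := (a, v \circ u)$ with units $1_{a} := (a, 1_{fa})$.

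The two legs are then defined on morphisms by letting $\varphi$ send $(a, u)$ to the chosen lift $\varphi(a, u)$ in $A$ (the leg and the lifting operation deliberately share the name, as in the statement) and by $\phibar(a, u) := u$. Since composition and identities in $X$ are inherited from $B$ in the second coordinate, $X$ is a category for the same reason $B$ is, and functoriality of $\phibar$ is immediate. Functoriality of $\varphi$ is exactly axioms~(2) and~(3), which say precisely that $\varphi(a, 1_{fa}) = 1_{a}$ and $\varphi(a, v \circ u) = \varphi(a', v) \circ \varphi(a, u)$. Commutativity of the triangle on morphisms is then axiom~(1): $f\varphi(a, u) = u = \phibar(a, u)$.

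It remains to verify the two stated properties of the legs. That $\phibar$ is a discrete opfibration is essentially the definition of $X$: given $a \in X$ and $u \colon \phibar(a) = fa \to b$ in $B$, the pair $(a, u)$ is a morphism of $X$ with $\phibar(a, u) = u$, and it is the unique such lift because every morphism of $X$ out of $a$ has the form $(a, w)$ with $\phibar(a, w) = w$. Faithfulness of $\varphi$ is where the lens structure genuinely intervenes: if $\varphi(a, u) = \varphi(a, u')$ in $A$, then applying $f$ and invoking axiom~(1) gives $u = f\varphi(a, u) = f\varphi(a, u') = u'$, so $(a, u) = (a, u')$ already in $X$.

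I expect no single deep obstacle; the creative step is simply recognising that $X$ should be the category of chosen lifts, after which each axiom lands on exactly one requirement. The one point demanding care is the bookkeeping of targets, namely insisting that the target of $(a, u)$ in $X$ be the target of the lift $\varphi(a, u)$ in $A$ rather than anything read off $B$. It is precisely this choice, together with axiom~(1) guaranteeing $fa' = b$, that makes $\varphi$ a strict functor on the nose while keeping $\phibar$ a discrete opfibration, and it is the interplay of these two constraints that I would check most carefully.
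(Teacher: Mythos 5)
Your construction is correct, and it is essentially the same as the one in Clarke's work \cite{Cla20b}, which the paper cites in lieu of giving a proof: there too the apex $X$ is the ``category of chosen lifts,'' with $\Ob(X) = \Ob(A)$, morphisms out of $a$ given by pairs $(a, u \colon fa \to b)$ whose target is the codomain of $\varphi(a,u)$, and the three lens axioms landing exactly on commutativity, functoriality of $\varphi$, and well-definedness of composition, just as you describe. Your identification of the target bookkeeping (and axiom~(3) making composition land on the right object) as the one delicate point is exactly right.
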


\begin{remark}
As noted in \cite{Cla20}, this result has a converse which 
implies that every lens $(f, \varphi) \colon A \br B$ may be 
identified with an equivalence class of diagrams, 
\begin{equation*}
\begin{tikzcd}[column sep=small, row sep=small]
& X
\arrow[ld, "\varphi"']
\arrow[rd, "\phibar"]
& \\
A 
\arrow[rr, "f"'] 
& & 
B
\end{tikzcd}
\qquad \simeq \qquad
\begin{tikzcd}[column sep=small, row sep=small]
& Y
\arrow[ld, "\gamma"']
\arrow[rd, "\gammabar"]
& \\
A 
\arrow[rr, "f"'] 
& & 
B
\end{tikzcd}
\end{equation*}
generated by isomorphisms $q \colon X \cong Y$ such that 
$\gamma \circ q = \varphi$ and $\gammabar \circ q = \phibar$. 
In practice, we may always identify a lens with a chosen representative 
\eqref{equation:triangle-rep} of this equivalence class. 
\end{remark}

Proposition~\ref{proposition:triangle-rep} is powerful as it allows us 
to prove results about lenses through manipulating their representation 
as diagrams in $\Cat$. 
For example, composition of lenses may be understood diagrammatically 
via pullback: 
\begin{equation}
\label{equation:composition}
\begin{tikzcd}[row sep = small, column sep = small]
& &[-1em] X \times_{B} Y
\arrow[ld]
\arrow[rd]
\arrow[dd, phantom, "\lrcorner" rotate = -45, very near start]
&[-1em] & \\
& X 
\arrow[ld, "\varphi"']
\arrow[rd, "\phibar"]
& & Y
\arrow[ld, "\gamma"']
\arrow[rd, "\overline{\gamma}"]
& \\
A
\arrow[rr, "f"']
& & B
\arrow[rr, "g"']
& & C
\end{tikzcd}
\end{equation}
This technique is central to proving many of the results in this 
paper, including the following lemma. 

\begin{lemma}
\label{lemma:division}
Consider the following commutative diagram in $\Cat$ 
with $g \colon B \rightarrow C$ a discrete opfibration: 
\begin{equation}
\label{equation:division}
\begin{tikzcd}[column sep=small, row sep=small]
A 
\arrow[rd, "g \circ f"']
\arrow[rr, "f"]
& & B
\arrow[ld, "g"]
\\
& C &
\end{tikzcd}
\end{equation}
Then: 
\begin{enumerate}[(i)]
\item If $g \circ f$ is a discrete opfibration, 
then $f$ is a discrete opfibration;
\item If $g\circ f$ has a lens structure, 
then $f$ has a unique lens structure such that 
\eqref{equation:division} commutes in $\Lens$.
\end{enumerate}
\end{lemma}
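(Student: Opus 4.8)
My plan is to prove both parts by elementary, elementwise reasoning built on a single observation about the discrete opfibration $g$: for each object $b$ of $B$, the functor $g$ restricts to a \emph{bijection} between the morphisms of $B$ with source $b$ and the morphisms of $C$ with source $gb$. Surjectivity of this restriction is the existence of lifts and injectivity is their uniqueness; its inverse is exactly the canonical lens structure $\gamma$ carried by $g$, sending $u \colon gb \to c$ to its unique lift $\gamma(b,u)$.

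For (i), I would fix a pair $(a \in A, u \colon fa \to b \in B)$ and seek a unique $w \colon a \to a'$ in $A$ with $fw = u$. Applying $g$ gives $gu \colon gfa \to gb$, so since $g\circ f$ is a discrete opfibration there is a unique $w$ with $(g\circ f)w = gu$. Both $fw$ and $u$ are then morphisms of $B$ out of $fa$ with the same image $gu$ under $g$, so the bijection above forces $fw = u$. Uniqueness of $w$ as an $f$-lift is immediate, since any competitor $w'$ with $fw' = u$ satisfies $(g\circ f)w' = gu$ and hence equals $w$.

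For (ii), write $\psi$ for the given lens structure on $g\circ f$. Unwinding, via Definition~\ref{definition:Lens-category}, the requirement that \eqref{equation:division} commute in $\Lens$ --- where $g$ carries its canonical structure $\gamma$ --- yields the single condition $\varphi(a, \gamma(fa, u)) = \psi(a, u)$ for every $(a, u \colon gfa \to c)$. Because every $w \colon fa \to b$ in $B$ is the unique $g$-lift of its own image, i.e.\ $w = \gamma(fa, gw)$, this condition determines $\varphi$ completely --- forcing $\varphi(a, w) := \psi(a, gw)$ --- which simultaneously establishes uniqueness. It then remains to check that this formula is a bona fide lens structure and does make the triangle commute.

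The hard part, and really the only computation, will be verifying the three axioms for $\varphi(a, w) = \psi(a, gw)$. Axiom~(1) is where the hypothesis on $g$ is essential: from $(g\circ f)\psi(a, gw) = gw$ I get that $f\psi(a, gw)$ and $w$ are morphisms out of $fa$ with equal $g$-image, hence $f\varphi(a,w) = w$. Axioms~(2) and~(3) then follow from the corresponding axioms for $\psi$ together with functoriality of $g$ (via $g1_{fa} = 1_{gfa}$ and $g(v\circ w) = gv\circ gw$), the intermediate-object bookkeeping in~(3) matching precisely because axiom~(1) identifies the target of $\varphi(a,w)$. Commutativity in $\Lens$ is then automatic: $\varphi(a, \gamma(fa,u)) = \psi(a, g\gamma(fa,u)) = \psi(a,u)$ since $g\gamma(fa,u) = u$.
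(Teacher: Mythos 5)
Your proposal is correct, and it takes a genuinely different route from the paper's proof. The paper treats part~(i) as a well-known property of discrete opfibrations (no argument given) and proves part~(ii) diagrammatically via the span representation of Proposition~\ref{proposition:triangle-rep}: the given lens structure on $g \circ f$ is presented as a span $A \leftarrow X \rightarrow C$ whose left leg $\varphi$ is bijective-on-objects and faithful and whose right leg $\phibar$ is a discrete opfibration, and the desired lens structure on $f$ is then the span $A \leftarrow X \rightarrow B$ with right leg $f \circ \varphi$, which is a discrete opfibration by part~(i) applied to $g \circ (f \circ \varphi) = \phibar$; commutativity in $\Lens$ and uniqueness are left to the reader as ``not difficult''. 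You instead argue elementwise from Definitions~\ref{definition:delta-lens} and~\ref{definition:discrete-opfibration}, using the observation that $g$ restricts to a bijection on morphisms out of each object: this yields part~(i) directly, and for part~(ii) it forces the explicit formula $\varphi(a, w) = \psi(a, gw)$, after which you verify the three lens axioms (your handling of the bookkeeping in axiom~(3), via axiom~(1) identifying intermediate objects, is the one delicate point and you get it right) and check commutativity. Your approach buys an explicit description of the unique lifting operation and makes the existence and uniqueness claims fully rigorous --- exactly the steps the paper elides --- while the paper's approach is shorter and exercises the span-representation technique that it reuses systematically elsewhere (for instance in Proposition~\ref{proposition:coproducts} and Theorem~\ref{theorem:factorisation}). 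The two constructions agree, of course: under Proposition~\ref{proposition:triangle-rep} the morphisms of $X$ with source $a$ correspond to the chosen lifts $\psi(a, u)$, and post-composition with $f$ recovers precisely your formula.
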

\begin{proof}
The first statement is a well-known property of discrete opfibrations. 
To prove the second statement, suppose $g \circ f$ has a lens structure 
given by the following commutative diagram of functors:
\begin{equation*}
\begin{tikzcd}[column sep=small, row sep=small]
& X
\arrow[ld, "\varphi"']
\arrow[rd, "\phibar"]
& \\
A 
\arrow[rr, "g \circ f"'] 
& & 
C
\end{tikzcd}
\end{equation*}
Now consider the commutative diagram of functors: 
\begin{equation*}
\begin{tikzcd}[column sep=small, row sep=small]
& X
\arrow[ld, "\varphi"']
\arrow[rd, "f \circ \varphi"]
& \\
A 
\arrow[rr, "f"'] 
& & 
B
\end{tikzcd}
\end{equation*}
For this to be a lens structure on $f$, we need to show that 
$f \circ \varphi$ is a discrete opfibration. 
However this follows from the first statement, 
since $g$ is a discrete opfibration and 
$g \circ (f \circ \varphi) = \phibar$ is a discrete opfibration. 
Using lens composition as in \eqref{equation:composition}, noting that 
discrete opfibrations are diagrams \eqref{equation:triangle-rep} where
$\varphi$ is an isomorphism, 
it is not difficult to show that this lens structure makes the diagram
\eqref{equation:division} commute, and that the lens structure on 
$f$ such that this holds is unique. 
\end{proof}

\section{Illustrative examples of lenses}

In this section, we present two basic examples illustrating how lenses
may arise in certain applications. 
These examples are not central to the purpose of this paper, but they may 
provide some concrete reference points for the abstract theory developed 
in the following sections.

\subsection*{State-transition machines as lenses}
\label{ex:machineinterface}

Let $B$ be a free monoid considered as a one-object category,
finitely generated by the set 
$\{\texttt{b}_1, \texttt{b}_2, \ldots, \texttt{b}_N\}$ 
where we consider the labels $\texttt{b}_{i}$ as 
\emph{interface buttons} used to operate a machine.

A lens $(f, \varphi) \colon A \br B$  can be understood as specifying 
a generalised state-transition machine, where the states are $\Ob(A)$, 
and the transitions are arrows of $A$ labelled by their domains and 
elements of the monoid $B$. 
We examine this in more detail.

The underlying functor $f$ maps arrows in $A$ to strings of labels in 
$B$. 
The lift $\varphi$ of the lens, given any object $a \in A$ and a transition 
label $\texttt{b} \in B$, selects a morphism $\varphi(a, \texttt{b})$ 
whose source is $a$.

The lifting operation $\varphi$ of the lens takes an object of $A$, 
a state of the machine, and shows what state-transition will take place if 
button $\texttt{b}_{i}$ is pressed when the machine is in that state.

In this example, the underlying functor $f$ necessarily maps all objects 
of $A$ to the single object of $B$, which suggests a natural generalisation. 
Indeed, the state-transition machine example extends to lenses with codomains 
of more than one object: the fibre of $f$ over $b \in \Ob(B)$ consists of 
a \emph{type} of states $f^{-1}(b) \subseteq \Ob(A)$, 
where the lens selects transitions out of $a \in f^{-1}(b)$ labelled by $B(b, -)$.

\begin{center}
\begin{scaletikzpicturetowidth}{\textwidth}
\begin{tikzpicture}[scale=\tikzscale]
	\begin{pgfonlayer}{nodelayer}
		\node [style=none] (0) at (0, 1) {$\texttt{ctrl}$};
		\node [style=none] (1) at (0.25, -6) {$\texttt{write}$};
		\node [style=none] (3) at (7, 1) {$\texttt{keys}$};
		\node [style=none] (5) at (7.5, 1.5) {};
		\node [style=none] (6) at (7.5, 0.5) {};
		\node [style=none] (7) at (6.5, 0.5) {};
		\node [style=none] (8) at (6.5, 1.5) {};
		\node [style=none] (9) at (8, 1.25) {};
		\node [style=none] (10) at (8, 0.5) {};
		\node [style=none] (14) at (0, 2.5) {$\bullet$};
		\node [style=none] (17) at (0.5, 0.25) {};
		\node [style=none] (18) at (0.5, -5.5) {};
		\node [style=none] (20) at (2.25, -3) {$\dag$};
		\node [style=none] (33) at (0, 3.5) {$\bullet$};
		\node [style=none] (34) at (-0.5, 2.5) {};
		\node [style=none] (35) at (-0.5, 3.25) {};
		\node [style=none] (36) at (0, 4.5) {$\bullet$};
		\node [style=none] (39) at (-1, 2) {$\texttt{v}$};
		\node [style=none] (40) at (-1, 3) {$\texttt{i}$};
		\node [style=none] (41) at (0.25, 5.5) {$\texttt{view}$};
		\node [style=none] (44) at (-1, 4) {$\texttt{e}$};
		\node [style=none] (45) at (-1, 5) {$\texttt{w}$};
		\node [style=none] (46) at (-0.25, 6) {};
		\node [style=none] (47) at (0.25, 6) {};
		\node [style=none] (49) at (0, -0.75) {$\bullet$};
		\node [style=none] (50) at (-0.25, 0.25) {};
		\node [style=none] (51) at (-0.25, -0.5) {};
		\node [style=none] (52) at (-0.75, -0.25) {$\texttt{w}$};
		\node [style=none] (58) at (0.25, -1) {};
		\node [style=none] (59) at (0.25, -1.75) {};
		\node [style=none] (60) at (0.75, -1.25) {$\texttt{r}$};
		\node [style=none] (61) at (0, -2) {$\bullet$};
		\node [style=none] (63) at (0, -3.25) {$\bullet$};
		\node [style=none] (64) at (-0.25, -2.25) {};
		\node [style=none] (65) at (-0.25, -3) {};
		\node [style=none] (66) at (-0.75, -2.75) {$\texttt{i}$};
		\node [style=none] (67) at (0.25, -3.5) {};
		\node [style=none] (68) at (0.25, -4.25) {};
		\node [style=none] (70) at (0, -4.5) {$\bullet$};
		\node [style=none] (72) at (0, -4.75) {};
		\node [style=none] (73) at (0, -5.5) {};
		\node [style=none] (74) at (-0.75, -5.25) {$\texttt{e}$};
		\node [style=none] (75) at (0.75, -3.75) {$\texttt{t}$};
		\node [style=none] (76) at (-0.5, -4.5) {};
		\node [style=none] (77) at (-0.75, 0.75) {};
		\node [style=none] (78) at (-0.5, -3.25) {};
		\node [style=none] (79) at (-0.5, -2) {};
		\node [style=none] (80) at (-0.75, -0.75) {};
		\node [style=none] (84) at (-2.5, -2) {$\star$};
		\node [style=none] (85) at (0.25, 2.25) {};
		\node [style=none] (86) at (0.25, 1.5) {};
		\node [style=none] (87) at (0.25, 3.25) {};
		\node [style=none] (88) at (0.25, 4.25) {};
		\node [style=none] (90) at (1.25, 3.25) {$\star$};
		\node [style=none] (96) at (2.25, 1) {};
		\node [style=none] (97) at (5.25, 1) {};
		\node [style=none] (98) at (2.25, 0.75) {};
		\node [style=none] (99) at (5.25, 0.75) {};
		\node [style=none] (100) at (5, 1.25) {};
		\node [style=none] (101) at (2.5, 0.5) {};
		\node [style=none] (102) at (-5.25, 1) {};
		\node [style=none] (103) at (-2.25, 1) {};
		\node [style=none] (104) at (-5.25, 0.75) {};
		\node [style=none] (105) at (-2.25, 0.75) {};
		\node [style=none] (106) at (-2.5, 1.25) {};
		\node [style=none] (107) at (-5, 0.5) {};
		\node [style=none] (108) at (8.75, 1.5) {$\dag$};
		\node [style=none] (109) at (7, -0.75) {$\texttt{a,b,c,\textvisiblespace}\cdots$};
		\node [style=none] (110) at (7, 2.75) {$\Leftarrow,\Rightarrow$};
		\node [style=none] (111) at (-13.5, -7) {$\textbf{POS.(}\texttt{HAMLET:}\downarrow\textbf{)},\textbf{BUF.(}\texttt{To\textvisiblespace be,\textvisiblespace or}\textbf{)}$};
		\node [style=none] (112) at (-14, -5) {$\textbf{POS.(}\texttt{HAMLET:}\downarrow\textbf{)},\textbf{BUF.(}\texttt{To\textvisiblespace be,\textvisiblespace o}\textbf{)}$};
		\node [style=none] (113) at (-14.5, -3) {$\textbf{POS.(}\texttt{HAMLET:}\downarrow\textbf{)},\textbf{BUF.(}\texttt{To\textvisiblespace be,\textvisiblespace}\textbf{)}$};
		\node [style=none] (114) at (-16.75, -3.5) {};
		\node [style=none] (115) at (-16.25, -4.5) {};
		\node [style=none] (116) at (-15.75, -5.5) {};
		\node [style=none] (117) at (-15.25, -6.5) {};
		\node [style=none] (118) at (-16, -4) {$\texttt{o}$};
		\node [style=none] (119) at (-15, -6) {$\texttt{r}$};
		\node [style=none] (121) at (-7.25, -7.75) {$\vdots$};
		\node [style=none] (128) at (0.75, -6.5) {};
		\node [style=none] (129) at (-0.25, -6.5) {};
		\node [style=none] (137) at (0, 7) {$\Leftarrow,\Rightarrow$};
		\node [style=none] (139) at (0.25, -7.5) {$\texttt{a,b,c,\textvisiblespace}\cdots$};
		\node [style=none] (141) at (-10.5, 6.75) {$\textbf{POS.(}\texttt{HAML}\downarrow\texttt{ET:}\textbf{)}$};
		\node [style=none] (142) at (-13.25, 6.25) {};
		\node [style=none] (143) at (-13.25, 5.25) {};
		\node [style=none] (144) at (-14.5, 5.75) {$\Rightarrow \Rightarrow \Rightarrow$};
		\node [style=none] (145) at (-10.5, 4.75) {$\textbf{POS.(}\texttt{HAMLET:}\downarrow\textbf{)}$};
		\node [style=none] (146) at (-7.25, 8.25) {$\vdots$};
		\node [style=none] (147) at (-7.25, 3.75) {$\vdots$};
		\node [style=none] (148) at (-6.75, 8) {};
		\node [style=none] (149) at (-6.75, 4) {};
		\node [style=none] (150) at (-6.25, 6) {};
		\node [style=none] (151) at (-1.25, 5.5) {};
		\node [style=none] (152) at (-5.5, 5.5) {};
		\node [style=none] (153) at (-3.75, 1.5) {$(f,\varphi)$};
		\node [style=none] (154) at (3.75, 1.5) {$(g,\gamma)$};
		\node [style=none] (155) at (-0.5, 1.5) {};
		\node [style=none] (156) at (-0.5, 2.25) {};
		\node [style=none] (157) at (-0.5, 3.5) {};
		\node [style=none] (158) at (-0.5, 4.25) {};
		\node [style=none] (159) at (-0.5, 4.5) {};
		\node [style=none] (160) at (-0.5, 5.25) {};
		\node [style=none] (161) at (1.25, 5.5) {};
		\node [style=none] (162) at (0.75, 1.25) {};
		\node [style=none] (163) at (2.25, 4.75) {$\dag$};
		\node [style=none] (164) at (-11.5, 1.75) {$\textbf{POS.(}\texttt{HAMLET:}\downarrow\textbf{)}^{\texttt{ctrl}}$};
		\node [style=none] (168) at (-12.5, -0.25) {$\textbf{POS.(}\texttt{HAMLET:To\textvisiblespace be,\textvisiblespace or}\downarrow\textbf{)}^{\texttt{ctrl}}$};
		\node [style=none] (170) at (-7.25, -1.25) {$\vdots$};
		\node [style=none] (171) at (-6.75, 3) {};
		\node [style=none] (172) at (-6.75, -1) {};
		\node [style=none] (173) at (-6.25, 1) {};
		\node [style=none] (174) at (-1.25, 1) {};
		\node [style=none] (175) at (-5, -0.25) {};
		\node [style=none] (176) at (-2.5, -0.25) {};
		\node [style=none] (177) at (-6.75, -2) {};
		\node [style=none] (178) at (-6.75, -8) {};
		\node [style=none] (179) at (-6, -5) {};
		\node [style=none] (180) at (-1.25, -6) {};
		\node [style=none] (181) at (-5.5, -6) {};
		\node [style=none] (183) at (-15.5, 0) {};
		\node [style=none] (184) at (-14, 4.75) {};
		\node [style=none] (185) at (-15.75, 2) {};
		\node [style=none] (186) at (-16.25, 4) {$\dag$};
		\node [style=none] (188) at (-15.75, 1.5) {};
		\node [style=none] (189) at (-17.25, -2.5) {};
		\node [style=none] (190) at (-17.25, 0.75) {$\texttt{writeTo\textvisiblespace be,\textvisiblespace}$};
		\node [style=none] (191) at (-7.75, -6.5) {};
		\node [style=none] (192) at (-9, -1) {};
		\node [style=none] (193) at (-7.5, -4) {$\dag$};
	\end{pgfonlayer}
	\begin{pgfonlayer}{edgelayer}
		\draw [style=arrow, in=45, out=135, looseness=3.50] (8.center) to (5.center);
		\draw [style=arrow, in=-135, out=-45, looseness=3.50] (6.center) to (7.center);
		\draw [style=arrow, bend left=135, looseness=2.75] (9.center) to (10.center);
		\draw [style=arrow, bend right=45] (18.center) to (17.center);
		\draw [style=arrow, bend left=45, looseness=1.25] (34.center) to (35.center);
		\draw [style=arrow, bend left=120, looseness=4.75] (46.center) to (47.center);
		\draw [style=arrow, bend right] (50.center) to (51.center);
		\draw [style=arrow, bend left] (58.center) to (59.center);
		\draw [style=arrow, bend right] (64.center) to (65.center);
		\draw [style=arrow, bend left] (67.center) to (68.center);
		\draw [style=arrow, bend right] (72.center) to (73.center);
		\draw [style=arrow, bend left=75] (76.center) to (77.center);
		\draw [bend left=60, looseness=1.25] (78.center) to (77.center);
		\draw [bend left=60, looseness=1.25] (79.center) to (77.center);
		\draw [bend left=60, looseness=1.25] (80.center) to (77.center);
		\draw [style=arrow, bend right=345, looseness=0.75] (85.center) to (86.center);
		\draw [bend left=45] (87.center) to (86.center);
		\draw [bend left=60] (88.center) to (86.center);
		\draw (96.center) to (97.center);
		\draw (99.center) to (98.center);
		\draw (100.center) to (97.center);
		\draw (98.center) to (101.center);
		\draw (102.center) to (103.center);
		\draw (105.center) to (104.center);
		\draw (106.center) to (103.center);
		\draw (104.center) to (107.center);
		\draw [style=arrow] (114.center) to (115.center);
		\draw [style=arrow] (116.center) to (117.center);
		\draw [style=arrow, in=-135, out=-45, looseness=2.25] (128.center) to (129.center);
		\draw [style=arrow] (142.center) to (143.center);
		\draw [style=arrow, bend left=45, looseness=1.25] (155.center) to (156.center);
		\draw [style=arrow, bend left=45, looseness=1.25] (157.center) to (158.center);
		\draw [style=arrow, bend left=45, looseness=1.25] (159.center) to (160.center);
		\draw [style=arrow, bend left=60] (161.center) to (162.center);
		\draw [style=reddishdashed, in=180, out=0] (150.center) to (152.center);
		\draw [style=reddishdashed] (152.center) to (151.center);
		\draw [style=reddishdashed, in=180, out=0, looseness=0.75] (148.center) to (150.center);
		\draw [style=reddishdashed, in=0, out=-180, looseness=0.75] (150.center) to (149.center);
		\draw [style=reddishdashed, in=180, out=0] (173.center) to (175.center);
		\draw [style=reddishdashed, in=180, out=0, looseness=0.75] (171.center) to (173.center);
		\draw [style=reddishdashed, in=0, out=-180, looseness=0.75] (173.center) to (172.center);
		\draw [style=reddishdashed] (175.center) to (176.center);
		\draw [style=reddishdashed, in=-165, out=0, looseness=0.75] (176.center) to (174.center);
		\draw [style=reddishdashed, in=180, out=0] (179.center) to (181.center);
		\draw [style=reddishdashed] (181.center) to (180.center);
		\draw [style=reddishdashed, in=180, out=0, looseness=0.75] (177.center) to (179.center);
		\draw [style=reddishdashed, in=0, out=-180, looseness=0.75] (179.center) to (178.center);
		\draw [style=arrow, in=-180, out=180, looseness=1.25] (184.center) to (185.center);
		\draw [style=arrow, in=150, out=-180, looseness=1.75] (188.center) to (189.center);
		\draw [style=arrow, bend right=15] (191.center) to (192.center);
	\end{pgfonlayer}
\end{tikzpicture}

\end{scaletikzpicturetowidth}
\end{center}

\begin{example}[``typed'' state-transition machines, and composition of lenses]
We sketch a rudimentary text-editor program operated by keystrokes from 
a keyboard. 
The $\mathit{STATE}$ category where objects are internal states of the program 
might resemble the leftmost diagram above: 
objects are tuples of strings with marked ($\downarrow$) cursor 
\textbf{positions} modelling text files, 
along with text \textbf{buffers} that hold onto strings of text to be inserted. 
We depict the path starting from the 
$\textbf{POS.(}\texttt{HAML}\downarrow\texttt{ET:}\textbf{)}$ state in 
\texttt{view}-mode, and inputting the keyboard sequence 
$\Rightarrow\Rightarrow\Rightarrow\dag\texttt{writeTo\textvisiblespace be,\textvisiblespace or}\dag$.

The program may have \emph{modes of operation}, 
such that the same key on the keyboard has different functions depending 
on the current mode of operation. 
We depict the $\mathit{MODE}$ category in the middle. 
In \texttt{view}-mode, arrow keys move the cursor's position through text. 
The special key $\dag$ enters \texttt{control}-mode which keeps memories 
of cursor position intact, while awaiting strings \texttt{view} or 
\texttt{write} to switch to another mode; failed commands return to \texttt{control}-mode, notated by wildcard $\star$ arrows in the diagram. 
The \texttt{write}-mode allows alphabetic inputs to fill a temporary text buffer, 
the contents of which are appended to the main body of text upon returning 
to \texttt{control}-mode. 
We model the coordination between $\mathit{STATE}$ and $\mathit{MODE}$ as a lens, 
in fact a discrete opfibration, $(f,\varphi) \colon \mathit{STATE} \br \mathit{MODE}$. 
The ``typing'' of states by modes arises from the fact that the fibre 
of $f$ over $\texttt{write}$ contains all states of the program 
accessible in \texttt{write}-mode, and similarly for the fibres of $f$ 
above $\texttt{ctrl}$ and $\texttt{view}$.

We model the $\mathit{KEYBOARD}$ as a one-object category with generating 
endomorphisms of alphabetic keys $\texttt{a,b,c,\textvisiblespace}\ldots$, 
arrow keys $\Leftarrow,\Rightarrow$ for navigation, and a command key $\dag$. 
The $\mathit{MODE}$ category is a state-machine over $\mathit{KEYBOARD}$, so we coordinate the 
two with a lens $(g,\gamma) \colon \mathit{MODE} \br \mathit{KEYBOARD}$. 
Altogether, we have a composition of lenses between categories 
$\mathit{STATE} \br \mathit{MODE} \br \mathit{KEYBOARD}$. 
\end{example}

\subsection*{Collaborative design strategies as lenses}

\vspace{-0.9cm}
\begin{figure}[tbh]
\begin{subfigure}[b]{0.45\linewidth}
\begin{center}
\begin{tikzpicture}[scale=0.85,transform shape]
\node at (0,0){
\begin{tikzcd}[row sep=small]
\fast & 
\\
\average
\arrow[u, dash] 
& \expensive 
\\
\slow 
\arrow[u, dash] 
& \cheap
\arrow[u, dash]
\\[-10pt]
F & R
\end{tikzcd}};
\end{tikzpicture}
\caption{Functionalities and resources.\label{fig:fun-res}}
\end{center}
\end{subfigure}
\begin{subfigure}[b]{0.45\linewidth}
\begin{center}
\begin{tikzpicture}[scale=0.85,transform shape]
\draw[dotted, thick, rounded corners=5mm]  (-3.6,-0.5) rectangle ++(2.7cm, 1.75cm);
\draw[dotted, thick, rounded corners=10mm]  (0.25,-1.5) rectangle ++(3.3cm, 3.9cm);
\node at (0,0){
\begin{tikzcd}[row sep=small]
& (\fast,\expensive) 
\arrow{d} 
\\
(\fast,\cheap) 
\arrow{ur}
\arrow{d}
& (\average, \expensive) 
\arrow{d} 
\\
(\average,\cheap) 
\arrow{dr}
\arrow{ur}
\arrow[dd, dash, dotted]
& (\slow,\expensive) 
\\
& (\slow,\cheap) 
\arrow{u}
\arrow[d, dash, dotted]
\\
\false 
\arrow{r}
& \true 
\end{tikzcd}};
\end{tikzpicture}
\caption{Fibre representation of a boolean profunctor.\label{fig:fibre}}
\end{center}
\end{subfigure}
\end{figure}

The monotone theory of co-design presented in 
\cite{Cen16,FS19} has found concrete 
applications in engineering, ranging from the design of intermodal 
mobility systems \cite{ZLSCFP20} to robotics and control 
\cite{ZCF21, ZMCF21}.

Let $F$ be a poset representing \emph{functionalities}, 
let $R$ be a poset representing \emph{costs} or \emph{resources}, 
and let $\Bool$ be the two element poset $\{ \false \rightarrow \true \}$. 
A \emph{boolean profunctor}, denoted by $F \nrightarrow R$, is a 
functor $F^{\op} \times R \rightarrow \Bool$ 
which captures a relation between functionalities and 
requirements modelling feasibility, where decreasing demanded functionalities, 
or increasing resources, both increase feasibility.

Consider hiring an autonomous vehicle (AV): 
depending on how sophisticated the AV will be, the ride cost might change. 
Suppose $F$ is the poset of performance grades of the AV, and $R$ is the 
poset of ride costs (see (a) above). 
We define a boolean profunctor relating $F$ and $R$ following the 
rationale that the only cheap rides are slow rides, and to get average 
and fast rides one needs to pay more.

Objectwise, a boolean profunctor behaves as a judgement of whether 
each $(F, R)$ pair is feasible, which is evident when we view the 
functor fibre-wise over $\Bool$ (see (b) above).
A lens structure on such a functor additionally provides, 
for each infeasible pair, a specified (reachable) feasible $(F, R)$ pair. 
For instance, the pair $(\average,\cheap)$ is infeasible. 
Possible ways to get feasible scenarios include accepting paying more 
(i.e.~mapping to $(\average,\expensive)$) or sacrificing performance 
(i.e.~mapping to $(\slow,\cheap)$). 
The lifting operation of a lens structure chooses one alternative.

Altogether, a lens in this setting models someone's design opinion: 
whether or not something is feasible, along with a 
\emph{satisfaction strategy} that informs how to concretely 
compromise infeasible parameters, by either lowering demanded 
functionalities or increasing supplied resources. 

\section{Limits, colimits, and a factorisation system}

In this section, we show that the category $\Lens$ has a terminal object, 
an initial object, small coproducts, and equalisers. 
We also provide a characterisation of the monomorphisms and epimorphisms, 
and prove that $\Lens$ has an (epi, mono)-factorisation system. 

\begin{proposition}
\label{proposition:terminal-object}
The category $\Lens$ has a terminal object.
\end{proposition}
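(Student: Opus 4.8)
The plan is to exhibit the terminal category $\mathbf{1}$ of $\Cat$ --- the category with a single object $\star$ and only its identity morphism --- as the terminal object of $\Lens$. The whole argument hinges on a single observation: because $\mathbf{1}$ has no non-identity morphisms, both the functor part and the lifting operation of any lens into $\mathbf{1}$ are completely forced, so the usual terminal object of $\Cat$ lifts verbatim to $\Lens$.

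First I would fix an arbitrary category $A$ and determine all lenses $(f, \varphi) \colon A \br \mathbf{1}$. Since $\mathbf{1}$ is terminal in $\Cat$, the underlying functor $f$ must be the unique functor $! \colon A \to \mathbf{1}$, collapsing every object to $\star$ and every morphism to $1_{\star}$. For the lifting operation, the only morphism $u \colon {!a} \to b$ available in $\mathbf{1}$ is $1_{\star} = 1_{!a}$, so $\varphi$ is defined solely on pairs $(a, 1_{!a})$, and axiom~(2) of Definition~\ref{definition:delta-lens} forces $\varphi(a, 1_{!a}) = 1_{a}$, leaving no freedom in the choice of lifts. I would then confirm that this forced assignment is a genuine lens by checking the three axioms: axiom~(1) holds since $!\varphi(a, 1_{!a}) = {!}(1_{a}) = 1_{\star}$, axiom~(2) holds by construction, and axiom~(3) is immediate because every composite in $\mathbf{1}$ is again $1_{\star}$ and its only lift is an identity.

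This simultaneously settles existence and uniqueness: the functor part is pinned down by terminality in $\Cat$, while the lifting operation is pinned down by axiom~(2), so there is exactly one lens $A \br \mathbf{1}$ for each $A$. This is precisely the universal property of a terminal object. I do not anticipate a serious obstacle here; the only point requiring care is verifying that the identities-only structure of $\mathbf{1}$ leaves no residual freedom in the lift, which is what upgrades mere existence to uniqueness and hence to terminality in $\Lens$.
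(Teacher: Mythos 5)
Your proposal is correct and takes essentially the same approach as the paper: the terminal object is the one-object discrete category $1$, the underlying functor of any lens $A \br 1$ is forced by terminality in $\Cat$, and the lifting operation is forced to be trivial by axiom~(2), which settles both existence and uniqueness. The paper's proof leaves this uniqueness argument implicit and additionally records the lens in the form of Proposition~\ref{proposition:triangle-rep} (as the span $A \leftarrow A_{0} \rightarrow 1$ with $A_{0}$ the discrete category of objects), but that representation is illustrative rather than essential, so your explicit verification covers the same ground.
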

\begin{proof}
The terminal object in $\Lens$, as in $\Cat$, is the discrete category 
$1$ with a single object.
Given a category $A$, the unique lens $A \br 1$ consists of the unique 
functor $! \colon A \rightarrow 1$ together with the 
trivial lifting operation.
Following Proposition~\ref{proposition:triangle-rep}, this lens may be 
represented as the commutative diagram,
\begin{equation}
\label{equation:terminal-object}
\begin{tikzcd}[column sep=small, row sep=small]
& A_{0}
\arrow[ld, "i"']
\arrow[rd, "!"]
& \\
A 
\arrow[rr, "!"'] 
& & 
1
\end{tikzcd}
\end{equation}
where $i \colon A_{0} \rightarrow A$ is the inclusion of the discrete 
category $A_{0}$ of objects into $A$. 
\end{proof}

\begin{example}[The terminal interface]
The terminal object $1$ in this setting is an interface with a single 
button (the identity) which does nothing. 
The lift of an identity is an identity, so pressing the button does 
not change the state of the machine. 
All machines are compatible with a `do-nothing' interface.
\end{example}

\begin{proposition}
\label{proposition:initial-object}
The category $\Lens$ has an initial object.
\end{proposition}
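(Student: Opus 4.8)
The plan is to show that the initial object in $\Lens$, exactly as in $\Cat$, is the empty category $0$ with no objects and no morphisms. Given any category $A$, I would first observe that the functor part of any lens $0 \br A$ is forced to be the unique empty functor $! \colon 0 \rightarrow A$: the category $\Cat$ has $0$ as its initial object, and since $\U \colon \Lens \rightarrow \Cat$ is identity-on-objects, the underlying functor of a lens out of $0$ admits no choice. It then remains only to equip $!$ with a lifting operation and to check that this can be done in exactly one way.

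Next I would argue that the lifting operation is uniquely determined, in fact vacuously. A lifting operation for $0 \br A$ must assign to each pair $(a \in 0, u \colon \,!a \rightarrow b \in A)$ a morphism of $0$; but $\Ob(0) = \emptyset$, so there are no such pairs, and the lifting operation is the empty function. The three axioms of Definition~\ref{definition:delta-lens} therefore hold vacuously. Hence there is exactly one lens $0 \br A$ for each $A$, and $0$ is initial. Mirroring the proof of Proposition~\ref{proposition:terminal-object}, I would also record the triangle representation supplied by Proposition~\ref{proposition:triangle-rep},
\begin{equation*}
\begin{tikzcd}[column sep=small, row sep=small]
& 0
\arrow[ld, "1_{0}"']
\arrow[rd, "!"]
& \\
0
\arrow[rr, "!"']
& &
A
\end{tikzcd}
\end{equation*}
noting that the bijective-on-objects functor out of the apex forces the apex to be $0$ again, and that $\phibar \colon 0 \rightarrow A$ is trivially a discrete opfibration.

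Unlike the terminal case, I expect no genuine obstacle here: the only step requiring any attention is the uniqueness of the lifting operation, and this is immediate because the empty domain offers no objects over which a lift could even be chosen. So whereas for the terminal object one must verify that lifts of identities are identities, for the initial object there is literally nothing to compute beyond observing that $0$ forces both the functor and its lift.
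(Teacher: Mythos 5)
Your proposal is correct and follows essentially the same route as the paper: take the empty category $0$, note that the underlying functor is forced and the lifting operation is the (vacuously unique) empty operation, and record the triangle representation with identity left leg. The paper's proof is simply a terser version of yours, leaving the vacuous uniqueness argument implicit in the phrase ``trivial lifting operation.''
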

\begin{proof}
The initial object in $\Lens$, as in $\Cat$, is the empty category $0$. 
Given a category $A$, the unique lens $0 \br A$ consists of the unique 
functor $! \colon 0 \rightarrow A$ together with the 
trivial lifting operation. 
\begin{equation}
\label{equation:initial-object}
\begin{tikzcd}[column sep=small, row sep=small]
& 0
\arrow[ld, equal]
\arrow[rd, "!"]
& \\
0
\arrow[rr, "!"'] 
& & 
A
\end{tikzcd}
\end{equation}
Following Proposition~\ref{proposition:triangle-rep}, this lens may be 
represented as the commutative diagram above.
\end{proof}

\begin{example}[The initial machine]
The initial object $0$ in this setting is the null machine with no 
internal states, which is compatible with any (unplugged) keyboard $A$.
\end{example}

\begin{proposition}
\label{proposition:coproducts}
The category $\Lens$ has small coproducts. 
\end{proposition}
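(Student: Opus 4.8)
The plan is to show that the disjoint union $\coprod_{i \in I} A_i$, which is the coproduct in $\Cat$, is again the coproduct in $\Lens$. First I would equip the coprojections with lens structures: each coprojection functor $\iota_j \colon A_j \rightarrow \coprod_{i \in I} A_i$ is a discrete opfibration, indeed a cosieve, since the disjoint union has no morphisms between distinct components, so any $u \colon \iota_j a \rightarrow b$ out of an object of the $j$-th component has $b$ in that same component and a unique preimage in $A_j$. By the discussion following Definition~\ref{definition:discrete-opfibration}, $\iota_j$ therefore carries a unique lens structure $(\iota_j, \psi_j)$ whose lift $\psi_j$ simply inverts the action of $\iota_j$; these are the coprojection lenses.

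Given an arbitrary family of lenses $(f_i, \varphi_i) \colon A_i \br C$, I would build the mediating lens $(f, \varphi) \colon \coprod_{i \in I} A_i \br C$ as follows. The underlying functor $f = [f_i]$ is the copairing supplied by the coproduct in $\Cat$. For the lifting operation, each object of $\coprod_{i \in I} A_i$ lies in a unique component, say $a = \iota_j a_0$, so I set $\varphi(a, u) = \iota_j \varphi_j(a_0, u)$ for $u \colon fa \rightarrow c$; this is well defined precisely because there are no cross-component morphisms, and the three axioms of Definition~\ref{definition:delta-lens} hold for $\varphi$ because they hold componentwise for each $\varphi_i$.

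To finish I would verify the universal property. Computing the composite $A_j \br \coprod_{i \in I} A_i \br C$ via Definition~\ref{definition:Lens-category}, the functor is $f \circ \iota_j = f_j$, while the lift sends $(a_0, u)$ to $\psi_j(a_0, \varphi(\iota_j a_0, u)) = \psi_j(a_0, \iota_j \varphi_j(a_0, u)) = \varphi_j(a_0, u)$, recovering $(f_j, \varphi_j)$ exactly. For uniqueness, the underlying functor is forced by the universal property of the coproduct in $\Cat$, and the same composite computation shows that any compatible lens must agree with $\varphi$ on each component, hence everywhere since every object lies in some component.

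I do not anticipate a serious obstacle, since the absence of morphisms between components makes the construction and all axiom checks entirely componentwise; the one calculation needing care is the composite with a coprojection, where one must use that the lift $\psi_j$ of a discrete opfibration inverts $\iota_j$, so that the composite genuinely returns $\varphi_j$ rather than some reindexed variant. Alternatively, the whole argument packages neatly through Proposition~\ref{proposition:triangle-rep}: taking $X = \coprod_{i \in I} X_i$ with left leg $\coprod_{i \in I} \varphi_i$, which is faithful and bijective-on-objects, and right leg $[\overline{\varphi_i}]$, which is a discrete opfibration because lifts again stay within components, exhibits the mediating lens as a single commuting triangle in $\Cat$.
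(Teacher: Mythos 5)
Your proposal is correct, and its main line of argument takes a genuinely more elementary route than the paper's. Both proofs begin identically: the coproduct injections are discrete opfibrations (indeed cosieves), hence carry unique lens structures. But for the universal property the paper works entirely diagrammatically through Proposition~\ref{proposition:triangle-rep}: it represents the given cocone of lenses as triangles in $\Cat$ and obtains the mediating lens as the coproduct triangle, citing closure of bijective-on-objects functors under coproducts and the existence of coproducts in $\Dopf / B$; the verification that composing with the injections recovers the original lenses, and the uniqueness of the mediating lens, are left implicit in that representation machinery. You instead construct the mediating lifting operation explicitly, $\varphi(\iota_j a_0, u) = \iota_j \varphi_j(a_0, u)$, check the three axioms of Definition~\ref{definition:delta-lens} componentwise, compute the composite with each injection via Definition~\ref{definition:Lens-category} using that the lift of a cosieve inverts it, and derive uniqueness from that same computation --- filling in exactly the details the paper compresses. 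What your approach buys is a self-contained, hands-on verification that never invokes the equivalence-class representation of lenses; what the paper's approach buys is brevity and a demonstration of the representation technique that drives most of its later proofs (equalisers, imported pullbacks, the factorisation theorem). Your closing paragraph, packaging the mediating lens as the single triangle with apex $\coprod_{i} X_i$, left leg $\coprod_{i} \varphi_i$ and right leg $[\,\overline{\varphi_i}\,]$, is essentially the paper's proof verbatim, so in effect you have supplied both arguments, with the explicit one carrying the burden of proof.
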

\begin{proof}
Given a pair of categories $A$ and $B$, their coproduct $A + B$
in $\Lens$ coincides with their coproduct in $\Cat$. 
The coproduct injections in $\Cat$ are discrete opfibrations, and 
therefore have a unique lens structure. 
To see that the universal property holds, consider a pair of lenses
$(f, \varphi) \colon A \br B$ and $(g, \gamma) \colon C \br B$ 
represented as commutative diagrams following 
Proposition~\ref{proposition:triangle-rep}: 
\begin{equation*}
\begin{tikzcd}[column sep=small, row sep=small]
& X
\arrow[ld, "\varphi"']
\arrow[rd, "\phibar"]
& \\
A 
\arrow[rr, "f"'] 
& & 
B
\end{tikzcd}
\qquad \qquad
\begin{tikzcd}[column sep=small, row sep=small]
& Y
\arrow[ld, "\gamma"']
\arrow[rd, "\gammabar"]
& \\
C 
\arrow[rr, "g"'] 
& & 
B
\end{tikzcd}
\end{equation*}
Since bijective-on-objects functors are closed under coproducts, and 
$\Dopf / B$ has coproducts, the unique lens $A + C \br B$ 
is represented by the commutative diagram: 
\begin{equation}
\label{equation:coproduct}
\begin{tikzcd}[column sep=small, row sep=small]
&[-1em] X + Y
\arrow[ld, "\varphi + \gamma"'] 
\arrow[rd, "{[\phibar,\, \gammabar]}"]
& \\
A + C
\arrow[rr, "{[f,\, g]}"']
& & B
\end{tikzcd}
\end{equation}
The above arguments extend to coproducts indexed by any set. 
\end{proof}

\begin{example}[Coproduct interfaces]
Consider $A$ and $C$ to be windowed programs that operate through a 
common interface $B$, a keyboard. 
The coproduct machine $A + C$ behaves as a window manager, that focuses 
on one window: functionally, the window manager forwards keystrokes 
from $B$ to whichever of $A$ or $C$ is currently in focus.
\end{example}

Unlike the previous examples of limits and colimits, 
equalisers in $\Lens$ are an example which does not coincide 
with the equaliser of the underlying functors in $\Cat$. 

\begin{proposition}
\label{proposition:equalisers}
The category $\Lens$ has equalisers.
\end{proposition}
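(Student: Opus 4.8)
The plan is to construct the equaliser directly as a subcategory of $A$, while keeping in mind that --- unlike the previous cases --- it will \emph{not} be the equaliser of the underlying functors in $\Cat$. Fix a parallel pair of lenses $(f, \varphi), (g, \gamma) \colon A \br B$. First I would unwind what it means for a lens $(h, \eta) \colon W \br A$ to equalise this pair. Using the composition formula of Definition~\ref{definition:Lens-category}, the two composites $W \br B$ have underlying functors $f \circ h$ and $g \circ h$ and lifting operations $(w, u) \mapsto \eta(w, \varphi(hw, u))$ and $(w, u) \mapsto \eta(w, \gamma(hw, u))$. Applying $h$ to the lifted morphisms and using axiom (1) for $(h,\eta)$, equality of these composites is seen to be equivalent to two conditions: $f \circ h = g \circ h$ in $\Cat$, together with $\varphi(hw, u) = \gamma(hw, u)$ for every $w \in W$ and every $u \colon fhw \to b$. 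In other words, an equalising lens is exactly one whose image lands in the locus of $A$ where both the functors and their lifting operations agree.

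Next I would build the candidate equaliser. Let $E \hookrightarrow A$ be the largest subcategory that is a cosieve --- i.e.\ whose inclusion $e \colon E \to A$ is a discrete opfibration --- and which consists only of objects $a$ with $fa = ga$ at which the lifts agree, $\varphi(a,u) = \gamma(a,u)$ for all $u \colon fa \to b$, and of morphisms on which $f$ and $g$ agree. Such a largest subcategory exists, being the union of all subcategories with these properties; concretely $\Ob E$ is the largest set of ``good'' objects closed under taking codomains of outgoing morphisms of $A$. Because $e$ is a discrete opfibration it carries a canonical isomorphism lens structure $(e, \epsilon)$, represented by a triangle as in Proposition~\ref{proposition:triangle-rep} whose left leg is an isomorphism, and the defining conditions on $E$ are precisely what make $(e, \epsilon)$ equalise the pair.

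It remains to verify the universal property, and this is where the lifting operation does the real work. Given any equalising lens $(h, \eta) \colon W \br A$, the first paragraph shows its image consists of good objects and of $f,g$-agreeing morphisms. The crucial extra point is that this image is automatically a cosieve: for $w \in W$ and any morphism $u \colon hw \to a'$ of $A$, the lift $\eta(w, u) \colon w \to w'$ satisfies $hw' = a'$ and $h\eta(w,u) = u$, so both $a'$ and $u$ lie in the image. Hence the image is a good cosieve and therefore contained in $E$, so $h$ factors uniquely through $e$ on underlying functors as $h = e \circ k$; faithfulness and injectivity on objects of $e$ then force a unique compatible lens structure $(k, \kappa)$, with $\kappa(w, v) := \eta(w, ev)$, giving the required factorisation in $\Lens$.

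The main obstacle I anticipate is the cosieve requirement: the inclusion of the naive ``agreement'' subcategory need not be a lens at all, since a subcategory inclusion admits a lens structure only when it is a cosieve. This forces the passage to the largest good \emph{cosieve} rather than the largest good subcategory, and it is exactly this cosieve-closure that makes the equaliser diverge from the $\Cat$ equaliser. Correspondingly, the subtle step in the universal property is showing that every equalising lens has cosieve image, which is the one place where the lens axioms, rather than mere functoriality, are essential.
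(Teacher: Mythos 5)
Your proof is correct and takes essentially the same route as the paper's (sketched) proof: the paper forms the equaliser $j \colon E \rightarrow A$ of the underlying functors in $\Cat$ and then takes the largest subobject $m \colon M \rightarrowtail E$ whose composite into $A$ is a discrete opfibration forming a cone over the pair in $\Lens$, which is precisely your ``largest good cosieve'' (agreeing objects and morphisms being exactly membership in the $\Cat$-equaliser, and lift-agreement being exactly the cone condition). Your universal-property argument --- that every equalising lens has cosieve image, followed by the unique lens structure on the induced functor as in Lemma~\ref{lemma:division}(ii) --- supplies exactly the details the paper's sketch omits.
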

\begin{proof}[Proof (sketch)]
Consider a parallel pair of lenses $(f, \varphi) \colon A \br B$ 
and $(g, \gamma) \colon A \br B$, and construct the equaliser 
$j \colon E \rightarrow A$ of their underlying functors in $\Cat$. 
The equaliser of the parallel pair of lenses is the largest subobject 
$m \colon M \rightarrowtail E$ such that $j \circ m \colon M \rightarrow A$ 
is a discrete opfibration which forms a cone over the parallel pair 
in $\Lens$. 
\end{proof}

\begin{example}[Equalising co-design strategies]
Consider a parallel pair of lenses 
$(f, \varphi)\colon F^{\op} \times R \br \Bool$ and 
$(g, \gamma) \colon F^{\op} \times R \br \Bool$ 
to model two experts' opinions on the design problem encoded 
by $F^{\op} \times R$.
Their equaliser $E \br F^{\op} \times R$ is an embedding of 
$E$ into $F^{\op} \times R$, which selects all 
pairs in $F^{\op} \times R$ such that the feasibility 
judgements $f$ and $g$ agree, and moreover, such that the satisfaction 
strategies $\varphi$ and $\gamma$ concur. 
The equaliser always exists: in the worst case where there is total 
disagreement, $E = 0$.
\end{example}
 
\begin{corollary}
\label{corollary:split-idempotents}
In the category $\Lens$, all idempotents split. 
\end{corollary}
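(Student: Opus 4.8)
The plan is to obtain this as a purely formal consequence of Proposition~\ref{proposition:equalisers}, exploiting the well-known fact that any category possessing equalisers splits its idempotents. No structural feature of $\Lens$ beyond the existence of equalisers is required, so the (somewhat delicate) construction of equalisers from the previous proof can be used entirely as a black box.

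First I would fix an idempotent lens $(e, \varepsilon) \colon A \br A$, so that $(e, \varepsilon) \circ (e, \varepsilon) = (e, \varepsilon)$, and form the equaliser $m \colon M \br A$ of the parallel pair $(e, \varepsilon)$ and the identity lens $1_{A} \colon A \br A$, which exists in $\Lens$ by Proposition~\ref{proposition:equalisers}. By the defining cone condition, $(e,\varepsilon) \circ m = 1_{A} \circ m = m$. Next I would use the universal property to produce the splitting. Since $(e,\varepsilon)$ is idempotent we have $(e,\varepsilon) \circ (e,\varepsilon) = (e,\varepsilon) = 1_{A} \circ (e,\varepsilon)$, so the lens $(e,\varepsilon) \colon A \br A$ equalises the parallel pair and therefore factors uniquely as $(e,\varepsilon) = m \circ r$ for some lens $r \colon A \br M$. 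Then $m \circ r \circ m = (e,\varepsilon) \circ m = m$, and since every equaliser is a monomorphism, $m$ is monic in $\Lens$, whence $r \circ m = 1_{M}$. Together with $m \circ r = (e,\varepsilon)$ this exhibits $(M, m, r)$ as a splitting of the idempotent $(e,\varepsilon)$.

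There is essentially no obstacle here: the reasoning is diagram-chasing in $\Lens$ and invokes only that equalisers exist and are monic, both of which are already in hand. The one point worth flagging is that, unlike the other (co)limits treated in this section, the equaliser $m$ need not agree with the equaliser of the underlying functors in $\Cat$; but this is immaterial to the argument, since the retract $(M, m, r)$ is extracted abstractly from the universal property rather than from any explicit description of $M$.
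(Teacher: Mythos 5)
Your proposal is correct and takes exactly the same route as the paper, whose entire proof reads: ``The splitting of an idempotent lens is given by the equaliser with the identity lens.'' You have simply spelled out the standard diagram chase (factor the idempotent through the equaliser via its universal property, then cancel the monomorphism) that the paper leaves implicit.
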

\begin{proof}
The splitting of an idempotent lens is given by the
equaliser with the identity lens. 
\end{proof}

\begin{remark}
Split idempotents are simple kinds of limits, but are interesting here 
for two reasons: they are also examples of coequalisers in $\Lens$ 
(which are explored further in the paper by Di Meglio \cite{MattACT21}) and 
they are also absolute (co)limits, meaning that they are examples of 
(co)equalisers which are preserved by any functor, in particular, by
the forgetful functor $\U \colon \Lens \rightarrow \Cat$. 
\end{remark}

Both coproduct injections and equalisers are examples of monomorphisms
in $\Lens$. 
We now turn our attention to establishing sufficient conditions for
a lens to be a monomorphism or an epimorphism. 

\begin{lemma}
\label{lemma:mono-lens}
If a lens is an injective-on-objects discrete opfibration, 
then it is a monomorphism. 
\end{lemma}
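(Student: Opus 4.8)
The plan is to prove directly that an injective-on-objects discrete opfibration $(m, \mu) \colon A \br B$ is left-cancellable in $\Lens$. So I take a parallel pair of lenses $(f, \varphi), (g, \gamma) \colon C \br A$ satisfying $(m,\mu) \circ (f, \varphi) = (m,\mu) \circ (g, \gamma)$, and aim to deduce that $(f,\varphi) = (g,\gamma)$. This separates into two tasks: showing the underlying functors agree, and then showing the lifting operations agree.

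First I would dispatch the functor part. A discrete opfibration is faithful, since any two morphisms with common source lying over the same base morphism must coincide by uniqueness of lifts; hence an injective-on-objects discrete opfibration is both injective on objects and faithful, and is therefore a monomorphism in $\Cat$. Applying the forgetful functor $\U$ to the assumed equation gives $m \circ f = m \circ g$ in $\Cat$, and cancelling the monomorphism $m$ yields $f = g$.

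The main work is the lifting operations. Unwinding the composition formula of Definition~\ref{definition:Lens-category}, the equality of composites says that for every object $c \in C$ and every morphism $u \colon mfc \rightarrow b$ in $B$,
\[
\varphi(c, \mu(fc, u)) = \gamma(c, \mu(gc, u)) = \gamma(c, \mu(fc, u)),
\]
where the last step uses $f = g$. The crux of the argument is that, because $m$ is a discrete opfibration, its lifts are the \emph{unique} morphisms of $A$ over a given base morphism: for any $v \colon fc \rightarrow a'$ in $A$, the morphism $v$ is itself a lift of $mv$ at $fc$, so uniqueness forces $\mu(fc, mv) = v$. Thus every morphism of $A$ out of an object in the image of $f$ is realised as such a lift. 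Substituting $u = mv$ into the displayed equation collapses it to $\varphi(c, v) = \gamma(c, v)$ for all $c$ and all such $v$, giving $\varphi = \gamma$ and hence $(f,\varphi) = (g,\gamma)$.

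The only subtle point — and the step I expect to be the real obstacle — is recognising that the uniqueness of lifts for the discrete opfibration $m$ is exactly what allows the full lifting operations $\varphi$ and $\gamma$ to be recovered from the composite, rather than being pinned down only on morphisms already of the form $\mu(fc, u)$. Once this observation is made, no further calculation is needed, and the argument requires no more than keeping the direction of the composition formula straight.
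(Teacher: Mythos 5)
Your proof is correct, and its overall shape --- first cancel the underlying functors using that an injective-on-objects discrete opfibration is a monomorphism in $\Cat$, then show the lifting operations agree --- matches the paper's. The difference is in how the second step is discharged. The paper disposes of the lifting operations in one line by invoking Lemma~\ref{lemma:division}(ii): a functor sitting under a discrete opfibration admits at most one lens structure making the triangle commute in $\Lens$, so since $\varphi$ and $\gamma$ are lens structures on the same functor $f = g$ and both make the triangle commute over the same composite, they coincide. Your argument instead proves exactly the needed instance of that uniqueness clause by hand: unwinding the composition formula to $\varphi(c, \mu(fc, u)) = \gamma(c, \mu(fc, u))$, and then observing that uniqueness of lifts for the discrete opfibration $m$ forces $\mu(fc, mv) = v$ for every $v \colon fc \rightarrow a'$ in $A$, so that every morphism out of an object in the image of $f$ is realised as a lift and the equation specialises to $\varphi(c, v) = \gamma(c, v)$. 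This is precisely the content hiding behind the paper's citation --- the uniqueness claim in Lemma~\ref{lemma:division}(ii) is itself only sketched there --- so your version is more self-contained and arguably more transparent about where discreteness of the opfibration is actually used. What the paper's route buys is brevity and reuse: the division lemma is isolated as a tool and invoked again later, for instance in Theorem~\ref{theorem:factorisation} and Proposition~\ref{proposition:pullback-discrete}.
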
 
\begin{proof}
Every injective-on-objects discrete opfibration is also 
injective-on-morphisms, thus a monomorphism in $\Cat$. 
Consider the following diagram in $\Lens$ 
(which omits the information of the lifting operation), 
consisting of a parallel 
pair of lenses $f$ and $f'$ which are equal to a lens $h$ under 
post-composition by an injective-on-objects discrete opfibration $g$: 
\begin{equation*}
\begin{tikzcd}[row sep=small, column sep=small]
A 
\arrow[rr, shift left, "f"]
\arrow[rr, shift right, "f'"']
\arrow[rd, "h"']
& & B
\arrow[ld, "g"]
\\
& C &
\end{tikzcd}
\end{equation*}
Since $g$ is a monomorphism in $\Cat$, the underlying functors of $f$ 
and $f'$ are equal. 
Furthermore, by Lemma~\ref{lemma:division}, the lifting operations on 
$f$ and $f'$ are also equal. 
\end{proof}

\begin{proposition}
\label{proposition:reflect-monos}
The functor $\U \colon \Lens \rightarrow \Cat$ reflects monomorphisms.
\end{proposition}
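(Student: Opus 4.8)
The plan is to argue directly from the definition of a monomorphism, reducing the equality of two lenses to the equality of their underlying functors together with the equality of their lifting operations. Throughout I would use the standard characterisation that a functor is a monomorphism in $\Cat$ precisely when it is injective on objects and faithful. So suppose $(f, \varphi) \colon A \br B$ is a lens whose underlying functor $f$ is a monomorphism in $\Cat$, and the goal is to show $(f, \varphi)$ is a monomorphism in $\Lens$.

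First I would establish a key identity forced by the hypothesis on $f$: for every object $a \in A$ and every morphism $w \colon a \to a'$ in $A$, we have $\varphi(a, fw) = w$. Indeed, axiom (1) gives $f\varphi(a, fw) = fw$, so $\varphi(a, fw)$ and $w$ have the same image under $f$; injectivity on objects forces them to share the codomain $a'$, and then faithfulness forces $\varphi(a, fw) = w$. In other words, under the mono hypothesis the chosen lifts of $\varphi$ out of any object $a$ already exhaust every morphism of $A$ with domain $a$.

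Next I would take a parallel pair of lenses $(h, \eta), (k, \kappa) \colon Z \br A$ satisfying $(f, \varphi) \circ (h, \eta) = (f, \varphi) \circ (k, \kappa)$, and show they coincide. Comparing underlying functors gives $f \circ h = f \circ k$, so $h = k$ since $f$ is monic in $\Cat$. For the lifting operations, the composition formula of Definition~\ref{definition:Lens-category} says that the composite lift at a pair $(z \in Z, u \colon fhz \to b)$ equals $\eta(z, \varphi(hz, u))$ on one side and $\kappa(z, \varphi(hz, u))$ on the other; hence $\eta(z, \varphi(hz, u)) = \kappa(z, \varphi(hz, u))$ for all such $z$ and $u$. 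Now given an arbitrary $w \colon hz \to a'$ in $A$, I would specialise to $u = fw$ and invoke the key identity $\varphi(hz, fw) = w$, so that $\eta(z, w) = \eta(z, \varphi(hz, fw)) = \kappa(z, \varphi(hz, fw)) = \kappa(z, w)$. Thus $\eta = \kappa$ and the two lenses agree, establishing that $(f,\varphi)$ is monic.

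I expect the main obstacle to be precisely the gap between the information the composite lens carries and the information needed: a priori the equality of composites only constrains $\eta$ and $\kappa$ on the chosen lifts $\varphi(hz, u)$, whereas two lifting operations must be compared on \emph{all} morphisms of $A$ out of $hz$. The key identity of the second paragraph is exactly what closes this gap, and it is the only place where both injectivity on objects and faithfulness of $f$ (that is, the full strength of $f$ being monic in $\Cat$, not merely faithful) are used. One could alternatively phrase the whole argument through the triangle representation of Proposition~\ref{proposition:triangle-rep} and the pullback description of composition in \eqref{equation:composition}, but the direct computation above seems more transparent.
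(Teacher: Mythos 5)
Your proof is correct, and its mathematical core coincides with the paper's, but the two arguments are packaged quite differently. The paper's proof is a short reduction: since $f$ is injective on objects and on morphisms, each pair $(a, u \colon fa \rightarrow b)$ has $\varphi(a,u)$ as its \emph{unique} lift, so $(f,\varphi)$ is an injective-on-objects discrete opfibration, and Lemma~\ref{lemma:mono-lens} (which itself rests on Lemma~\ref{lemma:division}(ii)) then says it is a monomorphism. Your key identity $\varphi(a, fw) = w$ is exactly this discrete opfibration property in disguise: given axiom (1), it says that lifts are unique, since any $w$ with $fw = u$ must equal $\varphi(a,u)$. Where you diverge is in what you do with this fact: instead of quoting Lemma~\ref{lemma:mono-lens}, you verify the cancellation property directly, unwinding the composition formula of Definition~\ref{definition:Lens-category} and using the key identity to upgrade agreement of $\eta$ and $\kappa$ on the chosen lifts $\varphi(hz,u)$ to agreement on all morphisms out of $hz$. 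The paper's route buys brevity and reuse, since Lemma~\ref{lemma:mono-lens} and Lemma~\ref{lemma:division} are infrastructure deployed elsewhere (for instance in Theorem~\ref{theorem:factorisation}); your route buys self-containedness, needing neither of those lemmas nor the diagrammatic representation of Proposition~\ref{proposition:triangle-rep}, and it makes completely explicit the point your final paragraph identifies: equality of composites only constrains the lifting operations on chosen lifts, and the mono hypothesis on $f$ is precisely what closes that gap.
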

\begin{proof}
We need to show that if a lens $(f, \varphi) \colon A \br B$ has an
underlying functor $f$ which is a monomorphism in $\Cat$, 
then the lens is a monomorphism. 
Since such a lens is injective-on-objects, by 
Lemma~\ref{lemma:mono-lens} it suffices to show that it is also a 
discrete opfibration. 
Now for each pair $(a \in A, u \colon fa \rightarrow b \in B)$, 
there exists a unique morphism $\varphi(a, u)$ in $A$ such that 
$f\varphi(a, u) = u$, since $f$ is injective-on-morphisms. 
\end{proof}

\begin{lemma}
\label{lemma:epi-lens}
If a lens is surjective-on-objects, then it is an epimorphism. 
\end{lemma}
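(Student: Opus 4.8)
The plan is to verify the universal property of an epimorphism directly: given a surjective-on-objects lens $(f, \varphi) \colon A \br B$ and a parallel pair of lenses $(g, \gamma), (h, \eta) \colon B \br C$ satisfying $(g, \gamma) \circ (f, \varphi) = (h, \eta) \circ (f, \varphi)$, I would show that $(g, \gamma) = (h, \eta)$, which amounts to proving both that $g = h$ as functors and that $\gamma = \eta$ as lifting operations.

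First I would treat the underlying functors. The key observation is that a surjective-on-objects lens is automatically surjective on morphisms: given any morphism $w \colon b \to b'$ in $B$, choose $a \in \Ob(A)$ with $fa = b$, and then the lift $\varphi(a, w)$ satisfies $f\varphi(a, w) = w$ by axiom (1), exhibiting $w$ in the image of $f$. A functor that is surjective on both objects and morphisms is an epimorphism in $\Cat$, since for any $g, h$ with $g \circ f = h \circ f$ the functors $g$ and $h$ must agree on the whole image of $f$, which is all of $B$. Hence from $g \circ f = h \circ f$ I can conclude $g = h$.

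It then remains to show that the lifting operations agree. Unwinding the composition formula of Definition~\ref{definition:Lens-category}, the hypothesis that the two composite lenses are equal gives $\varphi(a, \gamma(fa, u)) = \varphi(a, \eta(fa, u))$ for every $a \in \Ob(A)$ and every $u \colon gfa \to c$ in $C$, where $gfa = hfa$ since $g = h$. Applying $f$ to both sides and invoking axiom (1) to cancel the outer lift yields $\gamma(fa, u) = \eta(fa, u)$. Because $f$ is surjective on objects, every object of $B$ arises as some $fa$, so $\gamma = \eta$, and therefore $(g, \gamma) = (h, \eta)$.

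I expect the lifting-operation step to be the main obstacle, since it is the only place where the lens structure is used in an essential way: the comparison $\gamma = \eta$ must be extracted from an equation between iterated lifts, and the crucial move is to post-compose with $f$ so that axiom (1) collapses the outer application of $\varphi$. The functor part, by contrast, is a routine consequence of surjectivity once one notices that the lifting operation already forces surjectivity on morphisms.
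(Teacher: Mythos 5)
Your proposal is correct and follows essentially the same route as the paper's proof: establish surjectivity-on-morphisms via the lifting operation to conclude $g = h$ on underlying functors, then recover $\gamma = \eta$ by applying $f$ to the equation of composite lifts and cancelling via axiom (1). The only difference is that you spell out a couple of steps the paper leaves implicit (why surjective functors are epimorphisms in $\Cat$, and why surjectivity-on-objects lets the pointwise equality $\gamma(fa,u) = \eta(fa,u)$ cover all of $B$), which is harmless.
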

\begin{proof}
Consider a surjective-on-objects lens 
$(f, \varphi) \colon A \br B$. 
Then $(f, \varphi)$ must also be surjective-on-morphisms, since given any 
morphism $u \colon b \rightarrow b'$ in $B$, there exists an object 
$a$ such that $fa = b$, and thus from the lifting operation a morphism 
$\varphi(a, u) \colon a \rightarrow a'$ in $A$ such that 
$f\varphi(a, u) = u$. 
Therefore the underlying functor $f \colon A \rightarrow B$ is an 
epimorphism in $\Cat$. 
Now consider a parallel pair of lenses $(g, \gamma) \colon B \br C$ 
and $(g', \gamma') \colon B \br C$ such that $g \circ f = g' \circ f$ 
and $\varphi(a, \gamma(fa, u)) = \varphi(a, \gamma'(fa, u))$
for all pairs $(a \in A, u \colon gfa \rightarrow c \in C)$. 
Then $g = g'$ since $f$ is an epimorphism, and 
$\gamma(fa, u) = \gamma'(fa, u)$ since they are both equal to 
$f\varphi(a, \gamma(fa, u))$.
\end{proof}

\begin{corollary}
\label{corollary:reflect-epics}
The functor $\U \colon \Lens \rightarrow \Cat$ reflects epimorphisms.
\end{corollary}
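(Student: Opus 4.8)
The plan is to reduce the statement to Lemma~\ref{lemma:epi-lens}. Given a lens $(f, \varphi) \colon A \br B$ whose underlying functor $f$ is an epimorphism in $\Cat$, it suffices to prove that $f$ is surjective on objects: once this is known, Lemma~\ref{lemma:epi-lens} immediately yields that $(f, \varphi)$ is an epimorphism in $\Lens$. So the entire content of the proof is the object-surjectivity of $f$, and everything hinges on extracting this from the lens structure together with epicness in $\Cat$.

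The key structural observation I would use is that the object-image of any lens is a cosieve in its codomain. Indeed, if $b = fa$ lies in the image and $u \colon b \to b'$ is any morphism of $B$, then the lift $\varphi(a, u) \colon a \to a'$ satisfies $fa' = b'$ by axiom~(1), so $b'$ is again in the image. Writing $S \subseteq \Ob(B)$ for this cosieve, I would argue by contraposition: assuming $f$ is not surjective on objects, I will produce a parallel pair of functors out of $B$ that become equal after precomposition with $f$ yet are distinct, contradicting epicness in $\Cat$. Concretely, pick $b_0 \in \Ob(B) \setminus S$ and let $B_S \hookrightarrow B$ be the full subcategory on $S$. Because $S$ is a cosieve there are no morphisms of $B$ from $S$ into its complement, so $B$ is the collage of its complement over $B_S$, and $f$ corestricts to a functor through $B_S$. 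I would then glue two copies of $B$ along $B_S$, that is, form the pushout $C = B \sqcup_{B_S} B$ in $\Cat$ with coprojections $c_1, c_2 \colon B \to C$. These agree on $B_S$, hence $c_1 \circ f = c_2 \circ f$, but they separate the two copies of the complement, so $c_1(b_0) \neq c_2(b_0)$ and $c_1 \neq c_2$.

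The main obstacle is precisely this pushout step, and it is where the lens hypothesis does the real work: for a general functor, being an epimorphism in $\Cat$ does not force surjectivity on objects — the inclusion of a category into its idempotent completion is a standard epimorphism that adds new objects — so no purely formal argument can succeed, and such a functor cannot underlie a lens precisely because its image fails to be a cosieve. What rescues the lens case is the cosieve property, which guarantees that the pushout $C$ does not collapse the two copies of $b_0$ together. I would therefore take care to verify that $C$ is a genuine category and that its coprojections remain distinct on objects outside $S$; this is immediate from the absence of morphisms from $S$ to its complement, but it is the crux on which the contradiction rests. With object-surjectivity of $f$ thereby established, Lemma~\ref{lemma:epi-lens} completes the proof.
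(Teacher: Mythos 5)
Your chain of deductions is valid, but the claim on which you hang the whole approach is false --- and it is exactly the known fact that the paper's one-line proof invokes. Epimorphisms in $\Cat$ \emph{are} surjective on objects, for every functor, lens or not: the object functor $\Ob \colon \Cat \rightarrow \Set$ has a right adjoint (the codiscrete category functor), hence preserves pushouts, and a morphism is an epimorphism precisely when the two coprojections of its pushout along itself coincide; so $\Ob$ sends epimorphisms of $\Cat$ to epimorphisms (surjections) of $\Set$. Equivalently, your own pushout argument works verbatim for an arbitrary functor: any $f$ corestricts through the full subcategory $B_S$ on its object-image, and the two coprojections of $B \sqcup_{B_S} B$ remain distinct on every object outside $S$ simply because the object set of this pushout is the pushout of the object sets. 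The cosieve property is not what prevents the two copies of $b_0$ from being collapsed; nothing could collapse them. Moreover, your proposed counterexample is incorrect: the inclusion of a category into its idempotent completion is \emph{not} an epimorphism in $\Cat$, since two functors out of the Karoubi envelope that agree on the original category may send a formally split idempotent to two different (merely isomorphic) splittings in the codomain, and hence need not be equal.

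With that correction, the status of your proof is: it is correct, and it is a more self-contained variant of the paper's argument. The paper argues that $f$ epi in $\Cat$ implies $f$ surjective on objects (citing this as known), and then Lemma~\ref{lemma:epi-lens} finishes. You re-prove the cited fact in the case at hand, using the lens structure to observe that the image $S$ is a cosieve, which buys you an explicit description of $B \sqcup_{B_S} B$ as two copies of $B$ glued along $B_S$ with no morphisms between the two copies of the complement, so you never have to reason about general (and notoriously awkward) pushouts in $\Cat$. That is a legitimate trade: a longer but elementary construction in place of a citation. But the sentences claiming that ``no purely formal argument can succeed'' and that the lens hypothesis ``does the real work'' must be deleted --- the purely formal argument is precisely what succeeds, and the lens structure plays no essential role in establishing object-surjectivity.
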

\begin{proof}
This follows from Lemma~\ref{lemma:epi-lens}, since
every epimorphism in $\Cat$ is surjective-on-objects. 
\end{proof}

While Lemma~\ref{lemma:mono-lens} and Lemma~\ref{lemma:epi-lens} only 
provide sufficient conditions for monomorphisms and epimorphisms in 
$\Lens$, it is natural to wonder if they are also necessary conditions. 
This is indeed the case and is proved by Di Meglio \cite{MattACT21}.
Altogether, these results provide the following characterisation of 
monomorphisms and epimorphisms in $\Lens$. 

\begin{proposition}
\label{proposition:lens-mono}
A lens $(f, \varphi) \colon A \br B$ is a monomorphism if and only if 
any of the following hold: 
\begin{enumerate}
\item $(f, \varphi)$ is an injective-on-objects discrete opfibration;
\item $(f, \varphi)$ is a fully faithful discrete opfibration;
\item $f$ is a monomorphism in $\Cat$. 
\end{enumerate}
\end{proposition}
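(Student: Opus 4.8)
The plan is to prove a four-way equivalence --- conditions (1), (2), (3), and ``is a monomorphism'' --- by assembling a single cycle of implications out of the results already in hand, so that only one genuinely new implication needs to be supplied. Since Lemma~\ref{lemma:mono-lens} gives (1)$\Rightarrow$mono and Proposition~\ref{proposition:reflect-monos} gives (3)$\Rightarrow$mono, and since the reverse passage from an abstract monomorphism back to a concrete description is the subtle one, the organising idea is to close the loop $\text{mono} \Rightarrow (1) \Rightarrow (2) \Rightarrow (3) \Rightarrow \text{mono}$.

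First I would record that (1)$\iff$(2) is essentially Definition~\ref{definition:discrete-opfibration}, where a cosieve is defined as an injective-on-objects discrete opfibration and equivalently as a fully faithful one. If the equivalence is wanted in place, I would note that every discrete opfibration is faithful (two distinct lifts of a common morphism violate uniqueness of lifts), so the only content is ``injective-on-objects $\iff$ full'': for the forward direction the lift $\varphi(a, v)$ of a morphism $v \colon fa \rightarrow fa'$ lands in an object over $fa'$, which injectivity-on-objects identifies with $a'$; for the converse, fullness produces a lift of $1_{fa}$ whenever $fa = fa'$, and uniqueness together with axiom~(2) of Definition~\ref{definition:delta-lens} forces $a = a'$.

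Next I would verify (2)$\Rightarrow$(3). A fully faithful discrete opfibration is faithful and, by the previous paragraph, injective-on-objects; a functor that is faithful and injective-on-objects is precisely a monomorphism in $\Cat$, so the underlying $f$ is a monomorphism there. The implication (3)$\Rightarrow$mono is then exactly Proposition~\ref{proposition:reflect-monos}, which moreover shows en route that such an $f$ carries a discrete-opfibration structure realised by its own lifts, so this step also re-proves (3)$\Rightarrow$(1) if one prefers to route the argument that way.

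The one remaining implication, mono$\Rightarrow$(1), is the main obstacle and the only part not already packaged by the earlier lemmas: it asserts that an abstract monomorphism in $\Lens$ must in fact be an injective-on-objects discrete opfibration, which amounts to ruling out monomorphisms whose lifting operations are genuinely non-trivial. For this I would cite the result of Di~Meglio~\cite{MattACT21} referenced immediately before the statement. With that implication in place, the cycle $\text{mono} \Rightarrow (1) \Rightarrow (2) \Rightarrow (3) \Rightarrow \text{mono}$ closes, establishing the equivalence of all four conditions.
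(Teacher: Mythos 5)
Your proposal is correct and assembles the proof in essentially the same way the paper does: sufficiency via Lemma~\ref{lemma:mono-lens} and Proposition~\ref{proposition:reflect-monos}, the equivalence of (1) and (2) from the cosieve characterisation in Definition~\ref{definition:discrete-opfibration}, and the necessity direction (mono $\Rightarrow$ (1)) delegated to Di~Meglio~\cite{MattACT21}, exactly as the paper states. Your cyclic organisation and the filled-in details (faithfulness of discrete opfibrations, and faithful plus injective-on-objects being precisely the monomorphisms in $\Cat$) are accurate elaborations of what the paper leaves implicit.
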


\begin{proposition}
\label{proposition:lens-epi}
A lens $(f, \varphi) \colon A \br B$ is an epimorphism if and only if 
any of the following hold: 
\begin{enumerate}
\item $f$ is surjective-on-objects;
\item $f$ is surjective-on-morphisms.
\end{enumerate}
\end{proposition}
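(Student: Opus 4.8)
The plan is to prove the three conditions equivalent by establishing: (a) that (1) and (2) are equivalent for the underlying functor of any lens; (b) \emph{sufficiency}, that (1) implies $(f,\varphi)$ is an epimorphism; and (c) \emph{necessity}, that every epimorphism satisfies (1). For (a), the implication (2)~$\Rightarrow$~(1) holds for any functor, since a preimage of an identity $1_{b}$ witnesses $b$ lying in the object-image; the converse (1)~$\Rightarrow$~(2) is precisely the lifting argument already used in Lemma~\ref{lemma:epi-lens}, where surjectivity-on-objects together with the lifting operation $\varphi$ produces a preimage of every morphism of $B$. Part (b) is exactly the content of Lemma~\ref{lemma:epi-lens}. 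Thus the entire weight of the proposition falls on the necessity direction (c), which I would prove by contraposition.

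For necessity, the key structural observation is that the object-image $\operatorname{im} f \subseteq \Ob(B)$ of any lens is a \emph{cosieve}: if $b \in \operatorname{im} f$, say $b = fa$, and $u \colon b \to b'$ is any morphism of $B$, then $\varphi(a,u) \colon a \to a'$ satisfies $fa' = b'$ by axiom~(1), so $b' \in \operatorname{im} f$ as well. Consequently, if $f$ is not surjective-on-objects then $\operatorname{im} f = S$ is a \emph{proper} cosieve, and $(f,\varphi)$ corestricts to a lens $(f',\varphi') \colon A \br B_{S}$ into the full subcategory on $S$, followed by the cosieve inclusion $\iota \colon B_{S} \br B$. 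By Lemma~\ref{lemma:mono-lens}, $\iota$ is a monomorphism; and since $(f,\varphi) = \iota \circ (f',\varphi')$ is assumed epic, the usual fact that a composite $\iota \circ e$ being epic forces $\iota$ to be epic reduces the problem to showing that a \emph{proper} cosieve inclusion is never an epimorphism in $\Lens$.

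To produce the required counterexample I would build two distinct lenses out of $B$ that agree on the cosieve $B_{S}$ but differ over its complement $T = \Ob(B) \setminus S$ (a nonempty sieve); such a pair cannot be separated after precomposition with any lens whose object-image lies in $S$. The natural tool is the \emph{cokernel pair} of $\iota$, that is, the pushout of $\iota$ against itself, which exists because $\Lens$ has small coproducts (Proposition~\ref{proposition:coproducts}) and coequalisers \cite{MattACT21}. Its two coprojections agree precisely on the image of $\iota$ while keeping two separate copies of each object of $T$, so they are distinct as soon as $T \neq \emptyset$, witnessing that $\iota$ is not epic.

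The main obstacle is this last construction: one must verify that the two coprojections are \emph{genuine lenses} rather than mere functors, and that forming the coequaliser introduces no unexpected identifications collapsing the two copies of $T$ back together --- equivalently, that $\U \colon \Lens \rightarrow \Cat$ detects enough of this pushout on objects. This verification, in which the cosieve structure of lens images does the real work, is carried out by Di Meglio \cite{MattACT21} and completes the characterisation.
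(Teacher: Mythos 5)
Your overall architecture matches the paper's position more closely than you might expect: the paper does not actually prove the necessity direction either. Its ``proof'' of this proposition is precisely your parts (a) and (b) --- the equivalence of the two conditions and sufficiency are contained in Lemma~\ref{lemma:epi-lens} --- together with an explicit delegation of necessity to Di Meglio \cite{MattACT21}. So your final deferral to that reference is not a gap \emph{relative to the paper}. What you add beyond the paper is the reduction of necessity: the observation that the object-image $S$ of a lens is closed under outgoing morphisms, so that every lens factors as a surjective-on-objects lens followed by the inclusion of the full subcategory $B_{S}$ (this is exactly the factorisation of Theorem~\ref{theorem:factorisation}, rebuilt by hand), and hence that it suffices to show a proper cosieve inclusion $\iota \colon B_{S} \br B$ is never epic. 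That reduction is correct and is genuinely useful scaffolding.

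The genuine flaw is in how you propose to finish. You justify the existence of the cokernel pair of $\iota$ by asserting that $\Lens$ has coequalisers; nothing in this paper supports that (it only exhibits split idempotents as coequalisers and points to \cite{MattACT21} for further study), so ``coproducts $+$ coequalisers $\Rightarrow$ pushouts'' is resting on an unestablished premise. More importantly, the colimit property is not needed at all: to refute epi-ness of $\iota$ you only need \emph{some} pair of distinct lenses out of $B$ agreeing after precomposition with $\iota$, not a universal one. Construct in $\Cat$ the category $D$ obtained by gluing two copies of $B$ along $B_{S}$. Because $S$ is closed under outgoing morphisms, no new composites arise, and each coprojection $\kappa_{i} \colon B \rightarrow D$ is a discrete opfibration (indeed a cosieve), hence carries a unique lens structure; both composites $\kappa_{i} \circ \iota$ are then the unique lens structure on the discrete opfibration $B_{S} \rightarrow D$, so $\kappa_{1} \circ \iota = \kappa_{2} \circ \iota$ in $\Lens$, while $\kappa_{1} \neq \kappa_{2}$ on any object outside $S$. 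This settles your step (c) without invoking any colimits in $\Lens$ and without citing \cite{MattACT21}, making the argument self-contained --- which would in fact improve on the paper's own presentation.
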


It is surprising that unlike $\Cat$, the epimorphisms in $\Lens$ admit 
a simple characterisation; epimorphisms in $\Lens$ are discussed 
further in \cite{MattACT21}. 
Together, Proposition~\ref{proposition:lens-mono} and 
Proposition~\ref{proposition:lens-epi} have several consequences, 
including that $\Lens$ is a \emph{balanced} category. 

\begin{corollary}
\label{corollary:balanced}
A lens is an isomorphism if and only if it is a monomorphism and an 
epimorphism. 
\end{corollary}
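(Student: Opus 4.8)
The plan is to leverage the explicit characterisations of monomorphisms and epimorphisms just established, namely Proposition~\ref{proposition:lens-mono} and Proposition~\ref{proposition:lens-epi}. One direction is immediate and holds in any category: an isomorphism is always both a monomorphism and an epimorphism. So the work lies entirely in the converse, and the strategy is to show that a lens which is both mono and epi must have an underlying functor that is an isomorphism of categories, and then to promote this to an isomorphism in $\Lens$.

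First I would suppose that a lens $(f, \varphi) \colon A \br B$ is simultaneously a monomorphism and an epimorphism. From Proposition~\ref{proposition:lens-mono}, the monomorphism hypothesis gives that $(f, \varphi)$ is a fully faithful discrete opfibration; in particular $f$ is injective-on-objects and fully faithful. From Proposition~\ref{proposition:lens-epi}, the epimorphism hypothesis gives that $f$ is surjective-on-objects. Combining these observations, $f$ is bijective-on-objects and fully faithful, and hence an isomorphism of categories in $\Cat$.

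Next I would promote this isomorphism of underlying categories to an isomorphism in $\Lens$. Since $f$ is invertible, its inverse functor $f^{-1} \colon B \rightarrow A$ is itself an isomorphism, and therefore a discrete opfibration. By the earlier observation that discrete opfibrations are exactly those lenses whose lifting operation is an isomorphism, the functor $f^{-1}$ carries a canonical lens structure. Moreover, because a discrete opfibration admits a unique lens structure, the lifting operation $\varphi$ of $(f, \varphi)$ must coincide with the canonical lift determined by $f$. It then follows that the lens with underlying functor $f^{-1}$ is a two-sided inverse to $(f, \varphi)$ in $\Lens$: the underlying functors compose to identities, and by uniqueness of the lens structure on an identity functor the composite lenses are identity lenses.

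The only genuinely delicate point is this final step: verifying that an isomorphism of the \emph{underlying} categories really does lift to an isomorphism in $\Lens$, rather than merely in $\Cat$. The resolution is that both $f$ and $f^{-1}$ are discrete opfibrations and hence carry unique lens structures, so there is no freedom in the choice of lifting operation and the inverse functor automatically supplies an inverse lens. I expect this uniqueness argument to be the crux; everything else reduces to assembling the two characterisation propositions.
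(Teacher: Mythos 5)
Your proof is correct and follows essentially the same route as the paper: combine Proposition~\ref{proposition:lens-mono} and Proposition~\ref{proposition:lens-epi} to see that a mono--epi lens is a bijective-on-objects discrete opfibration, hence an isomorphism. The paper compresses this into one line (``immediate''), while you usefully spell out the step it leaves implicit --- that the inverse functor carries a (unique) lens structure, so the isomorphism in $\Cat$ genuinely lifts to one in $\Lens$.
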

\begin{proof}
It is immediate that every bijective-on-objects 
(that is, both injective-on-objects and surjective-on-objects)
discrete opfibration is an isomorphism, and conversely. 
\end{proof}

In a recent paper by Johnson and Rosebrugh \cite{JR21}, it was noted that 
$\Lens$ admits a proper orthogonal factorisaton system. 
Using the above propositions this is actually an 
(epi,~mono)-factorisation system, meaning that the left class is
exactly the epimorphisms, and the right class is exactly the 
monomorphisms.
We now provide a (new) proof of this result based on the following
two known results.

\begin{lemmaA}
There is an orthogonal factorisation system on $\Cat$ which factors
every functor into a surjective-on-objects functor followed by an
injective-on-objects fully faithful functor. 
\end{lemmaA}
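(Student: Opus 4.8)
The plan is to verify directly that the pair $(\mathcal{E}, \mathcal{M})$, where $\mathcal{E}$ is the class of surjective-on-objects functors and $\mathcal{M}$ is the class of injective-on-objects fully faithful functors, satisfies the defining conditions of an orthogonal factorisation system: that every functor admits an $(\mathcal{E}, \mathcal{M})$-factorisation, and that every morphism of $\mathcal{E}$ is orthogonal to every morphism of $\mathcal{M}$ (unique diagonal fillers). The ancillary closure conditions are immediate and I would dispatch them first: a composite of surjective-on-objects functors is surjective-on-objects, while injective-on-objects and full faithfulness are each stable under composition, and every isomorphism is simultaneously bijective-on-objects and fully faithful, so both classes contain the isomorphisms and are closed under composition.

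For the factorisation, given a functor $f \colon A \rightarrow B$ I would take $C$ to be the full subcategory of $B$ spanned by the objects in the image $\{ fa \mid a \in \Ob(A) \}$. Writing $m \colon C \hookrightarrow B$ for the full subcategory inclusion and $e \colon A \rightarrow C$ for the evident corestriction of $f$, one has $f = m \circ e$, where $e$ is surjective-on-objects by construction and $m$ is injective-on-objects and fully faithful, being a full subcategory inclusion.

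The substance lies in orthogonality. Given a commutative square whose left edge $e \colon A \rightarrow B$ lies in $\mathcal{E}$, whose right edge $m \colon C \rightarrow D$ lies in $\mathcal{M}$, with top $g \colon A \rightarrow C$ and bottom $h \colon B \rightarrow D$ satisfying $m \circ g = h \circ e$, I would construct the unique diagonal $d \colon B \rightarrow C$ in two stages. On objects, for $b \in \Ob(B)$ I choose $a$ with $ea = b$ and set $db = ga$; this is independent of the choice because injectivity-on-objects of $m$ together with the identity $mga = hb = mga'$ forces $ga = ga'$. On morphisms, for $\beta \colon b \rightarrow b'$ in $B$ I define $d\beta$ to be the unique morphism $db \rightarrow db'$ with $m(d\beta) = h\beta$, which exists and is unique by full faithfulness of $m$; functoriality of $d$ then follows from faithfulness of $m$. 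The triangle identities $d \circ e = g$ and $m \circ d = h$ are verified on objects and morphisms using the same representatives, and uniqueness of $d$ follows since surjectivity-on-objects of $e$ pins $d$ down on objects and faithfulness of $m$ pins it down on morphisms.

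I expect the main obstacle to be the morphism part of the diagonal: full faithfulness of $m$ is carrying the entire argument there, since surjectivity-on-objects of $e$ supplies no control over the morphisms of $B$. The only genuine checks are that the morphism-assignment $\beta \mapsto d\beta$ respects identities and composition and is compatible with both triangles, each of which reduces cleanly to the faithfulness of $m$; the interplay between injectivity-on-objects (for well-definedness on objects) and full faithfulness (for existence and uniqueness on morphisms) is thus the crux of the verification.
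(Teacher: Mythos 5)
Your proposal is correct. Note, however, that the paper itself offers no proof of Lemma~A: it is stated explicitly as one of ``two known results'' and used as a black box, together with Lemma~B, to derive the (epi, mono)-factorisation system on $\Lens$ in Theorem~\ref{theorem:factorisation}. So there is no argument in the paper to compare yours against; what you have written is a self-contained verification of the input the paper takes for granted. Your argument is the standard one: factor a functor through its full image (corestriction onto the full subcategory spanned by the image objects, followed by the full subcategory inclusion), then build the diagonal filler object-wise via surjectivity-on-objects of $e$ and injectivity-on-objects of $m$, and morphism-wise via full faithfulness of $m$, with faithfulness giving functoriality and uniqueness. Your closing observation is also the right diagnosis of where the work lies --- the left class controls only objects, so fullness and faithfulness of $m$ must do everything on morphisms; this is precisely why the right class must be fully faithful and not merely injective-on-objects. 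The one thing worth making explicit, if you wanted a fully polished write-up, is the check that $h\beta$ really is a morphism $m(db) \rightarrow m(db')$ in $D$ (it is, since $m(db) = mga = h(ea) = hb$), which is what licenses the appeal to fullness.
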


\begin{lemmaB}
There is an (epi, mono)-factorisation system on $\Dopf$ which factors
every discrete opfibration into a surjective-on-objects discrete 
opfibration (epimorphism) followed by an injective-on-objects 
discrete opfibration (monomorphism). 
\end{lemmaB}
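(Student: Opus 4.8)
The plan is to construct the factorisation explicitly via the image, and then to deduce the orthogonality axiom by restricting the known factorisation system on $\Cat$ (Lemma~A). Given a discrete opfibration $p \colon A \rightarrow B$, I would first observe that its object-image $S = p(\Ob A) \subseteq \Ob B$ is closed under targets: if $b = pa$ lies in $S$ and $u \colon b \rightarrow b'$ is a morphism of $B$, then the unique lift $w \colon a \rightarrow a'$ of $u$ satisfies $pa' = b'$, so $b' \in S$. Hence the full subcategory $I \hookrightarrow B$ on $S$ is not merely full but a cosieve, and $p$ factors as $A \xrightarrow{e} I \xrightarrow{m} B$ with $e$ the corestriction and $m$ the inclusion.

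Next I would verify that both factors lie in $\Dopf$ with the stated surjectivity and injectivity. The inclusion $m$ is injective-on-objects and, because $S$ is closed under targets, a discrete opfibration (every morphism out of an object of $I$ already lies in $I$); being injective-on-morphisms, it is a monomorphism in $\Dopf$. The corestriction $e$ is surjective-on-objects by construction, and it is a discrete opfibration precisely because $I$ is full in $B$: a morphism $u \colon ea \rightarrow i'$ of $I$ is a morphism of $B$, so it lifts uniquely through $p$ and the codomain of its lift sits over $i'$. Exactly as in the proof of Lemma~\ref{lemma:epi-lens}, a surjective-on-objects discrete opfibration is surjective-on-morphisms, hence an epimorphism in $\Cat$ and therefore in the subcategory $\Dopf$; dually, by the argument of Lemma~\ref{lemma:mono-lens}, $m$ is a monomorphism in $\Dopf$.

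It then remains to establish orthogonality, that is, unique diagonal fill-in for every commuting square in $\Dopf$ whose left leg $e$ is a surjective-on-objects discrete opfibration and whose right leg $m$ is an injective-on-objects discrete opfibration. Here I would import Lemma~A directly: since $m$ is in particular injective-on-objects and fully faithful while $e$ is surjective-on-objects, the square, viewed in $\Cat$, admits a unique diagonal filler $d$. To see that $d$ is a morphism of $\Dopf$, I would apply Lemma~\ref{lemma:division}(i) to the composite $m \circ d$, which is the lower edge of the square and hence a discrete opfibration; as $m$ is also a discrete opfibration, the Lemma forces $d$ to be one as well. Uniqueness in $\Cat$ immediately yields uniqueness in $\Dopf$.

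Finally I would assemble these facts into a factorisation system, recording closure of each class under composition and containment of the isomorphisms, and noting that the left class consists of epimorphisms and the right class of monomorphisms in $\Dopf$. The main obstacle is the opening verification---that the image of a discrete opfibration is a cosieve rather than merely a full subcategory---since this is exactly what upgrades $m$ from fully faithful to a genuine discrete opfibration and keeps the entire factorisation inside $\Dopf$. Once this is secured, everything else follows formally: the factorisation is explicit, and orthogonality comes essentially for free from Lemma~A combined with Lemma~\ref{lemma:division}(i).
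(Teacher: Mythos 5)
Your proof is correct, but it is worth situating it against the paper: the paper does not prove Lemma~B at all. It is stated as a known result, accompanied only by the remark that it is a special case of Lemma~A, in the sense that the factorisation system of Lemma~A restricts along the canonical inclusion $\Dopf \rightarrow \Cat$. Your argument is, in effect, a complete and correct elaboration of that one-line remark. The factorisation you construct (corestriction onto the full subcategory spanned by the object-image) is precisely Lemma~A's factorisation applied to a discrete opfibration, and your opening observation --- that the object-image of a discrete opfibration is closed under targets, so the full inclusion is a cosieve and the corestriction is again a discrete opfibration --- is exactly the content of the claim that Lemma~A's factorisation stays inside $\Dopf$. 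Your transfer of orthogonality is also the right mechanism, and notably the same one the paper uses to pass from $\Cat$ to $\Lens$ in Theorem~\ref{theorem:factorisation}: take the unique diagonal supplied by Lemma~A, then force it into the subcategory via Lemma~\ref{lemma:division}(i), since its postcomposite with the right-hand leg is a discrete opfibration.

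One caveat. Under the paper's convention, an ``(epi, mono)-factorisation system'' means the left class is \emph{exactly} the epimorphisms and the right class \emph{exactly} the monomorphisms. You prove the inclusions --- surjective-on-objects discrete opfibrations are epimorphisms in $\Dopf$, and cosieves are monomorphisms --- but not the converses, e.g.\ that a discrete opfibration which is a monomorphism in $\Dopf$ must be injective on objects. These converses do not follow formally from having an orthogonal factorisation system whose classes consist of epis and monos: in the category of topological spaces, quotient maps and injective continuous maps form an orthogonal factorisation system, yet quotient maps are not all of the epimorphisms. So the exactness claims need separate arguments (for monos, one can test against coslice projections $b \backslash B \rightarrow B$, which are discrete opfibrations; for epis, one can glue two copies of the codomain along a proper cosieve). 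Since the paper cites Lemma~B as known and only ever uses the existence of the factorisation together with its orthogonality, this omission is immaterial to the paper's development, but it should be addressed if your proof is meant to establish the statement in full strength.
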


Note that the second lemma is a special case of the first, in the 
sense that the canonical inclusion functor $\Dopf \rightarrow \Cat$ 
preserves the factorisation system. 
We are now able to prove the following result. 

\begin{theorem}
\label{theorem:factorisation}
The category $\Lens$ has an orthogonal factorisation system which factors
every lens into a surjective-on-objects lens (epimorphism) followed by 
a cosieve (monomorphism). 
\end{theorem}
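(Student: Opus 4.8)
The plan is to build the factorisation system by combining the triangle representation of a lens (Proposition~\ref{proposition:triangle-rep}) with the two known factorisation systems of Lemma A and Lemma B, and then to verify orthogonality using the division lemma (Lemma~\ref{lemma:division}(ii)). Since the intended left and right classes are exactly the epimorphisms and monomorphisms of $\Lens$ — characterised in Proposition~\ref{proposition:lens-epi} and Proposition~\ref{proposition:lens-mono} — they are automatically closed under composition and contain all isomorphisms. It therefore suffices to produce a factorisation of every lens and to verify the unique diagonal lifting property.

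For the factorisation, I would take a lens $(f,\varphi)\colon A \br B$ and a representing triangle $A \xleftarrow{\varphi} X \xrightarrow{\phibar} B$, where $\varphi$ is faithful bijective-on-objects and $\phibar$ is a discrete opfibration. Applying Lemma B to $\phibar$ factors it as $X \xrightarrow{e} M \xrightarrow{m} B$, with $e$ a surjective-on-objects discrete opfibration and $m$ a cosieve. The triangle $A \xleftarrow{\varphi} X \xrightarrow{e} M$ again has a faithful bijective-on-objects left leg and a discrete-opfibration right leg, so by Proposition~\ref{proposition:triangle-rep} it represents a lens $A \br M$; because $\varphi$ is bijective-on-objects and $e$ is surjective-on-objects, the underlying functor of this lens is surjective-on-objects, hence an epimorphism by Proposition~\ref{proposition:lens-epi}. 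The cosieve $m$ carries its canonical discrete-opfibration lens structure and is a monomorphism by Lemma~\ref{lemma:mono-lens}. Composing these two lenses as in diagram~\eqref{equation:composition} — where the pullback degenerates because the representing triangle of the cosieve has an identity left leg — recovers precisely the original triangle $A \xleftarrow{\varphi} X \xrightarrow{m \circ e} B$, so the factorisation is correct.

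For orthogonality I would take a commuting square in $\Lens$ with a surjective-on-objects lens $e$ on the left and a cosieve $m$ on the right, say $m \circ g = h \circ e$, and seek a diagonal $B \br C$. On underlying functors the left leg is surjective-on-objects and the right leg is an injective-on-objects fully faithful functor, so Lemma A supplies a unique functor diagonal $d_0$ with $m \circ d_0 = h$ and $d_0 \circ e = g$ in $\Cat$. Since $m$ is a discrete opfibration and $h = m \circ d_0$ already carries a lens structure, Lemma~\ref{lemma:division}(ii) endows $d_0$ with a unique lens structure $\hat d$ for which $m \circ \hat d = h$ holds in $\Lens$. The remaining triangle $\hat d \circ e = g$ then follows by cancelling the monomorphism $m$ from $m \circ (\hat d \circ e) = h \circ e = m \circ g$, and the same cancellation yields uniqueness of the lens diagonal.

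The main obstacle I anticipate is not the functor-level diagonal, which Lemma A hands over directly, but the passage from $\Cat$ to $\Lens$: one must ensure the diagonal's lens structure is simultaneously compatible with both triangles of the square, not merely with the underlying commuting square. This is exactly where Lemma~\ref{lemma:division}(ii) and the fact that cosieves are monomorphisms do the essential work — the former producing a canonical lens lift along the discrete opfibration $m$, and the latter upgrading agreement of underlying functors to agreement of lenses. A secondary point worth checking carefully is that lens composition of the two factors genuinely reconstructs the original lens, which reduces to the observation that composing with a cosieve leaves its representing triangle essentially unchanged.
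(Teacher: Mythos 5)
Your proposal follows essentially the same route as the paper's own proof: apply Lemma~B to the discrete opfibration leg $\phibar$ of the representing triangle, take the first factor as the surjective-on-objects (epi) part and the cosieve as the mono part, and verify orthogonality by lifting the unique functor diagonal supplied by Lemma~A to a lens via Lemma~\ref{lemma:division}(ii), then upgrading the remaining triangle to an equation of lenses by cancelling the monomorphism $m$. Your orthogonality argument is, step for step, the paper's own, and your closing check that composing the two factors recovers the original lens (via the degenerate pullback in diagram~\eqref{equation:composition}) is a point the paper leaves implicit, so that part is a welcome addition.

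There is, however, one real gap in your factorisation step. You assert that the span $A \xleftarrow{\varphi} X \xrightarrow{e} M$ ``represents a lens $A \br M$'' by Proposition~\ref{proposition:triangle-rep}. But a representing diagram in the sense of that proposition (and of the converse discussed in the remark following it) is a \emph{commutative triangle}: it includes a functor along the base, here some $f' \colon A \rightarrow M$ with $f' \circ \varphi = e$, and the span alone does not supply one --- $\varphi$ is faithful but not full, so morphisms of $A$ cannot simply be transported across it. The existence of this base functor is exactly what the paper extracts from Lemma~A at this point: since $\varphi$ is surjective-on-objects and the cosieve $m$ is injective-on-objects and fully faithful, the commutative square $m \circ e = f \circ \varphi$ admits a unique diagonal $f' \colon A \rightarrow M$ satisfying $f' \circ \varphi = e$ and $m \circ f' = f$, and this $f'$ is then necessarily surjective-on-objects (which also justifies your claim about the underlying functor of the epi factor). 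With that single invocation of Lemma~A inserted, your factorisation paragraph is complete and the whole argument goes through.
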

\begin{proof}
Consider a lens $(f, \varphi) \colon A \br B$ represented by the 
diagram \eqref{equation:triangle-rep}. 
By Lemma~B, we can factorise $\phibar \colon X \rightarrow B$ into 
a surjective-on-objects discrete opfibration $j \colon X \rightarrow I$ 
followed by an injective-on-objects (fully faithful) discrete opfibration
$k \colon I \rightarrow B$. 
By Lemma~A, the orthogonality property induces a unique 
functor $f'$ which is necessarily surjective-on-objects:
\begin{equation*}
\begin{tikzcd}
X 
\arrow[d, "\varphi"']
\arrow[r, "j"]
& I
\arrow[d, "k"]
\\
A 
\arrow[r, "f"']
\arrow[ru, dashed, "f'"]
& B
\end{tikzcd}
\end{equation*}
This provides the (epi, mono)-factorisation of the lens 
$(f, \varphi) \colon A \br B$ as claimed. 

To show this is an orthogonal factorisation system, consider the 
following diagram in $\Lens$ where $e$ is an epimorphism and 
$m$ is a monomorphism: 
\begin{equation}
\label{equation:orthogonality}
\begin{tikzcd}
A
\arrow[d, two heads, "e"']
\arrow[r, "f"]
& C
\arrow[d, tail, "m"]
\\
B 
\arrow[r, "g"']
& D
\end{tikzcd}
\end{equation}
Considering the diagram \eqref{equation:orthogonality} under the 
forgetful functor $\Lens \rightarrow \Cat$, 
by Lemma~A there exists a unique functor 
$h \colon B \rightarrow C$ such that $h \circ e = f$ and $m \circ h = g$
in $\Cat$. 
Since $m$ is a discrete opfibration, by Lemma~\ref{lemma:division} 
the functor $h$ has a unique lens structure such that 
$m \circ h = g$ in $\Lens$. 
Moreover, since $m$ is a monomorphism in $\Lens$, we also have 
that $h \circ e = f$ in $\Lens$. 
This proves the claim of orthogonality. 
\end{proof}

\begin{remark}
It is interesting to note that the forgetful functor 
$\Lens \rightarrow \Cat$ sends the (epi, mono)-factorisation in $\Lens$
to both the orthogonal factorisation system on $\Cat$ stated in 
Lemma~A, as well as the classical \emph{image factorisation} of a 
functor. 
\end{remark}
\begin{example}[BIOS / OS factorisation]
Recall that when interpreting lenses as state machines, 
the objects in the codomain of the lens can model \emph{modes} or \emph{types} 
of states in the domain. 
For a computer, such a codomain might look like the two-object category
$\{ \texttt{BIOS} \rightarrow \texttt{OS} \}$ with some additional 
endomorphisms. 
The arrow models the fact that the BIOS is encountered at startup, and 
if nothing is done to stay in the BIOS, there is a one-way transition 
into the OS where all everyday operations occur. 

A software engineer who is only interested in the everyday operations 
is concerned only with the behaviour of the computer over the 
$\texttt{OS}$ states. 
This leads to a factorisation of 
$\{ \texttt{EverydayOperation} \} \br \{ \texttt{BIOS} \rightarrow \texttt{OS} \}$ 
as the epimorphism of interest 
$\{ \texttt{EverydayOperation} \} \br \{ \texttt{OS} \}$, followed by the 
embedding monomorphism 
$\{ \texttt{OS} \} \br \{ \texttt{BIOS} \rightarrow \texttt{OS} \}$.
\end{example}

\section{Imported limits, distributivity, and extensivity}

In this section, we introduce a notion of \emph{imported limits}, 
and show that the category $\Lens$ has imported products and imported 
pullbacks. 
While generally imported limits do not coincide with limits in $\Lens$,
we show that $\Lens$ admits all products with discrete categories, 
and all pullbacks along discrete opfibrations. 
We also show that imported products and imported pullbacks in $\Lens$ 
behave nicely with coproducts, proving that $\Lens$ is a 
distributive and extensive category. 
 
\begin{definition}
\label{definition:imported-limit}
The \emph{imported limit} of a diagram $\D \colon J \rightarrow \Lens$
along the forgetful functor $\U \colon \Lens \rightarrow \Cat$ 
is a canonical cone $\Delta_{\D}$ over $\D$ 
such that $\U \circ \Delta_{\D}$ coincides with the limit of the diagram
$\U \circ \D \colon J \rightarrow \Cat$. 
\end{definition}

\begin{remark}
The above definition is an attempt to describe the phenomenon where 
the projection functors from a limit in $\Cat$ 
(for example, products or pullbacks) have canonical lens 
structures, without explaining what is meant by \emph{canonical}. 
A thorough investigation of this concept is planned for future work. 
\end{remark}

Every limit \emph{created} by the forgetful functor 
$\U \colon \Lens \rightarrow \Cat$ is an imported limit; 
for example, terminal objects and monomorphisms. 
The goal of this section is to consider two examples of 
imported limits which are \emph{not necessarily} limits in $\Lens$. 

\subsection*{Imported products and distributivity}

Possibly the simplest example of an imported limit in $\Lens$, 
which is not a limit in general, is the imported product. 
In the literature, this has previously be called the 
\emph{constant complement lens} \cite{JRW12}.

\begin{proposition}
\label{proposition:imported-product}
The category $\Lens$ has all imported products along the forgetful 
functor to $\Cat$. 
\end{proposition}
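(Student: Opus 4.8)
The plan is to exhibit, for every set-indexed family $(A_i)_{i \in I}$ of small categories, a canonical cone of lenses over the corresponding discrete diagram $\D$ whose underlying functors are the product projections in $\Cat$. First I would form the product $P = \prod_{i \in I} A_i$ in $\Cat$, whose objects are tuples $(a_i)_{i \in I}$ and whose morphisms are componentwise. The only real task is then to equip each projection functor $\pi_i \colon P \to A_i$ with a canonical lens structure; the resulting family of lenses $(\pi_i, \varphi_i) \colon P \br A_i$ is automatically a cone over $\D$, since a discrete indexing category $J$ has no nonidentity arrows, and $\U$ sends this cone to the product cone in $\Cat$ by construction. By Definition~\ref{definition:imported-limit} this is exactly an imported product, and I would remark at the end that the construction extends verbatim to arbitrary index sets.

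The canonical lens structure on $\pi_i$ is the \emph{constant complement} lifting: given an object $(a_j)_{j}$ and a morphism $u \colon a_i \to b$ in $A_i$, define $\varphi_i((a_j)_j, u)$ to be the tuple which is $u$ in position $i$ and the identity $1_{a_j}$ in every other position. I would verify the three axioms of Definition~\ref{definition:delta-lens} directly: applying $\pi_i$ to the lift returns $u$; lifting $1_{a_i}$ returns the identity tuple; and lifting a composite $v \circ u$ agrees with the composite of the two lifts, because the complement components compose as $1 \circ 1 = 1$. These checks are routine and I would not belabour them.

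To package this diagrammatically in the style of Proposition~\ref{proposition:triangle-rep}, I would represent the lens via the triangle with apex $X_i = A_i \times \prod_{j \neq i} (A_j)_{0}$, where $(A_j)_{0}$ denotes the discrete category on $\Ob(A_j)$. The leg $\varphi_i \colon X_i \to P$ is induced by the object-inclusions $(A_j)_{0} \hookrightarrow A_j$ together with the identity on $A_i$; it is bijective-on-objects and faithful. The other leg $\phibar_i \colon X_i \to A_i$ is the projection, and the key observation is that a projection out of a product all but one of whose factors is discrete is a discrete opfibration: a morphism $u \colon a_i \to b$ out of an object of $X_i$ lifts uniquely, the complement components being forced to be identities since a discrete category has only identity morphisms. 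Since $\pi_i \circ \varphi_i = \phibar_i$, this triangle represents precisely the constant-complement lens, recovering the lifting operation described above.

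I expect the main obstacle to be conceptual rather than computational. Because an imported product need not be a product in $\Lens$, there is no universal property in $\Lens$ to appeal to, so the entire content lies in producing a lens structure \emph{canonically} and uniformly across the whole family, and in confirming that this structure is genuinely a lens. The crucial step sustaining both the triangle representation and the third (Put-Put) axiom is that the complement factors are taken to be discrete, which is exactly what makes each $\phibar_i$ a discrete opfibration; this is the point I would be most careful to get right, and it is also what identifies the construction with the constant complement lens of \cite{JRW12}.
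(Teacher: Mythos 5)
Your proposal is correct and follows essentially the same route as the paper: both equip the product projections with the constant-complement lifting (the lifted morphism in position $i$, identities elsewhere) and exhibit it via the triangle of Proposition~\ref{proposition:triangle-rep} with apex $A_i \times \prod_{j \neq i} (A_j)_0$, the discretised complement making the second leg a discrete opfibration. The only difference is presentational: the paper works out the binary case and remarks that it extends to arbitrary index sets, whereas you handle the general set-indexed case directly and verify the axioms explicitly.
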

\begin{proof}
Given a pair of categories $A$ and $B$, we need to show that the 
projection functors 
(for example, $\pi_{0} \colon A \times B \rightarrow A$)
have a canonical lens structure. 
Using Proposition~\ref{proposition:triangle-rep}, the lens structure
on the projection functor may be represented by the following diagram
in $\Cat$, 
\begin{equation}
\label{equation:imported-product}
\begin{tikzcd}[column sep=small, row sep=small]
& A \times B_{0}
\arrow[ld, "1 \times i"']
\arrow[rd, "\pi_{0}"]
& \\
A \times B
\arrow[rr, "\pi_{0}"'] 
& & 
A
\end{tikzcd}
\end{equation}
where $i \colon B_{0} \rightarrow B$ is the inclusion of the discrete
category $B_{0}$ of objects into $B$. 
More explicitly, the lifting operation on $\pi_{0}$ is given by 
$\varphi((a, b), u \colon a \rightarrow a') = (u, 1_{b})$. 
The above argument extends to imported products indexed by any set. 
\end{proof}

\begin{remark}
In general, the imported product of a pair of categories is \emph{not} 
the cartesian product in $\Lens$, as the corresponding universal 
property does not hold. 
For example, given the imported product $A \times A$, 
there does not exist (in general) a unique lens 
$A \br A \times A$ such that the 
composite with the projections yields identity lenses, since 
a lifting operation 
$\varphi(a \in A, 
(u \colon a \rightarrow x, v \colon a \rightarrow y) \in A \times A)$
is not well-defined unless $u = v$. 
\end{remark}

Despite the above remark, there are instances where the imported 
product in $\Lens$ does coincide with the cartesian product in $\Lens$.

\begin{proposition}
\label{proposition:product-discrete}
The imported product $A \times B$ in $\Lens$ corresponds with the 
cartesian product in $\Lens$ if $A$ or $B$ is a discrete category. 
\end{proposition}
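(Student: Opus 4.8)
The plan is to reduce to the case where $B$ is discrete — this is harmless since the swap $A \times B \cong B \times A$ is a bijective-on-objects discrete opfibration, hence an isomorphism in $\Lens$ compatible with the projection lenses of Proposition~\ref{proposition:imported-product}. The decisive structural observation, which I would isolate before doing any computation, is that when $B$ is discrete every morphism of $A \times B$ with source $(a, b)$ has the form $(u \colon a \rightarrow a',\, 1_{b})$: the second component is forced to be an identity. This is exactly the fact that dissolves the obstruction recorded in the Remark after Proposition~\ref{proposition:imported-product}, where a prospective lift had to simultaneously reconcile two independent morphisms; here there is only ever one morphism to lift, in the first coordinate.

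Next I would construct the mediating lens and verify the universal property. Given any object $C$ of $\Lens$ together with lenses $(f, \varphi) \colon C \br A$ and $(g, \gamma) \colon C \br B$, the functor part of any mediating lens is forced to be the pairing $\langle f, g\rangle \colon C \rightarrow A \times B$ by the universal property of the product in $\Cat$, since the underlying triangles $\pi_{0} \circ \langle f, g\rangle = f$ and $\pi_{1} \circ \langle f, g\rangle = g$ determine it uniquely. For the lifting operation I would set $\psi(c, (u, 1_{gc})) \coloneqq \varphi(c, u)$, which is well-typed precisely because the only morphisms out of $(fc, gc)$ have identity second component. Checking the three lens axioms for $\psi$ then reduces directly to the corresponding axioms for $\varphi$; the one place that genuinely uses discreteness is axiom (1), where one must confirm that $g\varphi(c, u)$ equals $1_{gc}$, which holds because $g$ maps into a discrete category and so sends every morphism to an identity.

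Finally I would confirm compatibility and uniqueness. Composing $(\langle f, g\rangle, \psi)$ with $\pi_{0}$ recovers $(f, \varphi)$ using the explicit lift $\varphi_{\pi_{0}}((a,b), u) = (u, 1_{b})$ of Proposition~\ref{proposition:imported-product}, and composing with $\pi_{1}$ recovers $(g, \gamma)$, since over a discrete $B$ the latter is necessarily the trivial lift. Uniqueness is then immediate: any mediating lens must agree with $\psi$ on each morphism $(u, 1_{gc})$ out of $(fc, gc)$ — this value is pinned down by compatibility with $\pi_{0}$ — and by the opening observation these exhaust the morphisms out of that object. I expect the computation to be essentially routine; the only real content is the initial remark that discreteness of $B$ collapses the morphisms of $A \times B$, so that the lift is fully determined by the first projection and the over-determination that fails in general simply cannot occur. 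The sole step warranting care is the typing check in axiom (1), namely that the forced lifts stay within the fibres of $g$.
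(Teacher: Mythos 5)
Your proposal is correct --- each step checks out --- but it takes a more elementary, hands-on route than the paper. The paper's proof is essentially two sentences: since $B_{0}$ is discrete, the projection lens $A \times B_{0} \br A$ of Proposition~\ref{proposition:imported-product} is a discrete opfibration, so Lemma~\ref{lemma:division}(ii) applied to the triangle $\pi_{0} \circ \langle f, g \rangle = f$ immediately yields both the existence and the uniqueness of a lens structure on $\langle f, g \rangle$ commuting with $\pi_{0}$ in $\Lens$; commutation with the other projection is then automatic, because a lens into a discrete category necessarily carries the trivial lifting operation. Your opening observation --- that every morphism out of $(a,b)$ has the form $(u, 1_{b})$ --- is precisely the statement that $\pi_{0}$ is a discrete opfibration, so the structural insight is shared; the difference is that you then re-derive by hand exactly what Lemma~\ref{lemma:division}(ii) delivers: your explicit lift $\psi(c, (u, 1_{gc})) = \varphi(c,u)$, the three axiom checks, and the uniqueness argument pinned down by compatibility with $\pi_{0}$ together constitute an unrolled special case of that lemma. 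What the paper's route buys is brevity and reusability --- the same pattern (recognise a projection as a discrete opfibration, invoke the division lemma) is deployed again verbatim for Proposition~\ref{proposition:pullback-discrete} on pullbacks along discrete opfibrations. What your route buys is self-containedness and an explicit formula for the mediating lens, and you also make the reduction to the case of discrete $B$ via the swap isomorphism explicit, a point the paper's statement covers but its proof leaves implicit.
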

\begin{proof}
Consider the imported product $A \times B_{0}$ where $B_{0}$ is a 
discrete category. 
Then the projection lens $A \times B_{0} \br A$ defined in 
\eqref{equation:imported-product} is a discrete opfibration. 
Thus given any pair of lenses $(f, \varphi) \colon C \br A$ and 
$(g, \gamma) \colon C \br B_{0}$, the canonical functor 
$\langle f, g \rangle \colon C \rightarrow A \times B_{0}$ has 
a unique lens structure which commutes with the projection 
$A \times B_{0} \br A$ by Lemma~\ref{lemma:division}. 
This unique lens structure also 
commutes with the other projection $A \times B_{0} \br B_{0}$. 
Therefore, $A \times B_{0}$ has the universal property of the 
product in $\Lens$. 
\end{proof}

To show that $\Lens$ is distributive, we first need the following 
corollary of Proposition~\ref{proposition:imported-product}.

\begin{corollary}
\label{corollary:monoidal-structure}
The category $\Lens$ has a semi-cartesian symmetric monoidal structure 
given by imported product, and the forgetful functor 
$\U \colon \Lens \rightarrow \Cat$ is strong monoidal. 
\end{corollary}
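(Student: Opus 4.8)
The plan is to establish three things: that imported product gives a symmetric monoidal structure, that it is semi-cartesian (the unit is terminal), and that $\U$ is strong monoidal for this structure. The starting point is Proposition~\ref{proposition:imported-product}, which equips each projection with a canonical lens structure via the representation \eqref{equation:imported-product}; the task is to promote the pointwise imported products into a coherent monoidal bifunctor on $\Lens$.

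First I would define the tensor product on objects as $A \otimes B := A \times B$ (the cartesian product in $\Cat$) with unit the terminal category $1$, which is terminal in $\Lens$ by Proposition~\ref{proposition:terminal-object}; this immediately makes the structure \emph{semi-cartesian}. Next I would define the action on morphisms: given lenses $(f,\varphi)\colon A \br A'$ and $(g,\gamma)\colon B \br B'$, the underlying functor of $(f,\varphi)\otimes(g,\gamma)$ is $f\times g\colon A\times B\rightarrow A'\times B'$, and the lifting operation is the evident componentwise combination $\varphi\times\gamma$, sending a pair of objects and a morphism $(u,v)\colon (fa,gb)\rightarrow(a'',b'')$ to $(\varphi(a,u),\gamma(b,v))$. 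I would verify the three lens axioms hold componentwise, which is routine since they hold separately for $\varphi$ and $\gamma$, and check functoriality of $\otimes$ (preservation of identities and composites) by unwinding Definition~\ref{definition:Lens-category}; because lens composition is defined componentwise on the two factors, this reduces to the corresponding facts in $\Cat$ together with the componentwise behaviour of the lifts.

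Then I would exhibit the coherence isomorphisms. The associator, left and right unitors, and symmetry are all \emph{isomorphisms} in $\Lens$; by Corollary~\ref{corollary:balanced} it suffices that their underlying functors are the standard coherence isomorphisms of $(\Cat,\times,1)$, since bijective-on-objects discrete opfibrations are exactly the isomorphisms in $\Lens$. Concretely, each coherence functor is an isomorphism of categories, hence a (bijective-on-objects) discrete opfibration, and so has a unique lens structure that is automatically an isomorphism in $\Lens$. The pentagon and triangle coherence axioms, along with naturality of each structure isomorphism, then hold in $\Lens$ because $\U$ is faithful enough on isomorphisms: two parallel lens isomorphisms agreeing under $\U$ are equal, so the coherence diagrams in $\Lens$ commute as soon as their $\U$-images do, and the latter are exactly the cartesian coherence diagrams in $\Cat$.

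Finally, strong monoidality of $\U$ is essentially built in: by construction $\U(A\otimes B)=A\times B=\U A\times\U B$ and $\U(1)=1$ on the nose, and the structure constraints of $\U$ are identities, so $\U$ is in fact \emph{strict} monoidal into the cartesian structure on $\Cat$, a fortiori strong. I expect the main obstacle to be the bookkeeping for naturality of the symmetry and associator as transformations in $\Lens$ rather than merely in $\Cat$ --- that is, checking that the componentwise lifts are genuinely respected by the coherence maps. The cleanest way to discharge this is the faithfulness-on-isomorphisms argument above: rather than verify lift-compatibility by hand, I would observe that all the relevant squares are squares of isomorphisms in $\Lens$ whose $\U$-images commute, and invoke uniqueness of lens structures on isomorphisms (Corollary~\ref{corollary:balanced}) to lift commutativity from $\Cat$ to $\Lens$.
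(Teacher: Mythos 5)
Your overall skeleton is sound, and indeed the paper supplies no proof of this corollary at all (it is stated as an immediate consequence of Proposition~\ref{proposition:imported-product}), so the content you propose to fill in is exactly what is needed: the componentwise tensor $(f,\varphi)\otimes(g,\gamma)$ with lift $((a,b),(u,v))\mapsto(\varphi(a,u),\gamma(b,v))$, the componentwise verification of the lens axioms and of functoriality, the terminal unit $1$ making the structure semi-cartesian, and the observation that $\U$ is strict (hence strong) monoidal. Your identification of the coherence cells is also correct: each coherence isomorphism of $(\Cat,\times,1)$ is an isomorphism of categories, hence a bijective-on-objects discrete opfibration, hence carries a unique lifting operation and is invertible in $\Lens$; and for the pentagon, triangle and hexagon \emph{axioms} your uniqueness argument does work, since every arrow in those diagrams is an isomorphism (tensors of isomorphisms are isomorphisms by functoriality of $\otimes$), so the two composites around each diagram are parallel isomorphisms in $\Lens$ with equal underlying functors and are therefore equal.

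The gap is in your proposed discharge of \emph{naturality}, which you yourself flag as the main obstacle: the naturality squares are not squares of isomorphisms. Naturality of the symmetry, for instance, asserts $\sigma_{A',B'}\circ(f\otimes g)=(g\otimes f)\circ\sigma_{A,B}$ for \emph{arbitrary} lenses $(f,\varphi)\colon A\br A'$ and $(g,\gamma)\colon B\br B'$; neither composite is an isomorphism, and their common underlying functor is not an isomorphism, so uniqueness of lens structures on isomorphisms says nothing about whether their lifting operations agree. Attempts to cancel the symmetry formally (e.g.\ applying Lemma~\ref{lemma:division} to the discrete opfibration $\sigma_{A',B'}^{-1}$) only reproduce the same equation, so some computation with lifts is unavoidable. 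Fortunately it is short: unwinding Definition~\ref{definition:Lens-category}, the left composite lifts $((a,b),\,(v,u)\colon(gb,fa)\rightarrow(b'',a''))$ first through $\sigma_{A',B'}$ to $(u,v)$ and then through $\varphi\times\gamma$ to $(\varphi(a,u),\gamma(b,v))$, while the right composite lifts through $\gamma\times\varphi$ to $(\gamma(b,v),\varphi(a,u))$ and then through $\sigma_{A,B}$ to the same pair $(\varphi(a,u),\gamma(b,v))$; the associator and unitor naturality squares are checked identically. With these computations substituted for the faulty isomorphism argument, your proof is complete and matches what the paper leaves implicit.
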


\begin{proposition}
\label{proposition:distributive}
The category $\Lens$ is a distributive monoidal category with 
respect to the imported product monoidal structure. 
In other words, imported products distribute over coproducts. 
\end{proposition}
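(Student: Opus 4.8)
The plan is to exhibit the canonical comparison lens
\[
k \colon (A \times B) + (A \times C) \longrightarrow A \times (B + C),
\]
where $\times$ denotes the imported product monoidal structure of Corollary~\ref{corollary:monoidal-structure}, and to prove that it is an isomorphism. First I would construct $k$ entirely inside $\Lens$: the coproduct injections $B \br B + C$ and $C \br B + C$ are discrete opfibrations, hence carry canonical lens structures (Proposition~\ref{proposition:coproducts}), so tensoring each with the identity lens on $A$ yields lenses $A \times B \br A \times (B + C)$ and $A \times C \br A \times (B + C)$. Applying the universal property of the coproduct in $\Lens$ then produces $k$, which by construction is the standard distributivity morphism.

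The key move is to pass to $\Cat$ along $\U$. Since $\U$ is strong monoidal for the imported product (Corollary~\ref{corollary:monoidal-structure}), it sends $A \times (B+C)$ and $(A \times B) + (A \times C)$ to the corresponding cartesian products in $\Cat$; and since coproducts in $\Lens$ coincide with those in $\Cat$ (Proposition~\ref{proposition:coproducts}) and $\U$ is identity-on-objects, $\U$ preserves the coproduct together with its injections. Hence $\U k$ is precisely the canonical comparison morphism $(A \times B) + (A \times C) \to A \times (B+C)$ in $\Cat$. Because $\Cat$ is cartesian closed, the functor $A \times (-)$ preserves colimits, so $\Cat$ is distributive and $\U k$ is an isomorphism.

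It remains to transfer this isomorphism back to $\Lens$, and here lies the one genuinely nonformal point: $\U$ is neither full nor faithful, so an isomorphism upstairs cannot be pulled back directly. Instead I would use that an isomorphism is in particular both a monomorphism and an epimorphism, that $\U$ reflects monomorphisms (Proposition~\ref{proposition:reflect-monos}) and epimorphisms (Corollary~\ref{corollary:reflect-epics}), and that $\Lens$ is balanced (Corollary~\ref{corollary:balanced}). Thus $\U k$ being an isomorphism forces $k$ to be simultaneously mono and epi in $\Lens$, whence $k$ is an isomorphism and imported products distribute over coproducts. The only real care needed is the bookkeeping of the first paragraph --- checking that the comparison built from lens data is genuinely sent by $\U$ to the $\Cat$-comparison --- after which the balancedness argument does all the work.
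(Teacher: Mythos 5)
Your proof is correct, and its first two paragraphs coincide exactly with the paper's: you build the same canonical lens $[1 \times \iota_{B}, 1 \times \iota_{C}] \colon (A \times B) + (A \times C) \br A \times (B + C)$, push it along $\U$ using strong monoidality and preservation of coproducts, and invoke distributivity of $\Cat$. Where you diverge is the transfer step, which the paper dispatches with ``follows immediately'' from $\U$ being a strong monoidal \emph{isofibration}. The precise content behind that phrase is that $\U$ reflects isomorphisms: if the underlying functor $f$ of a lens is an isomorphism of categories, the lifting operation is forced to be $\varphi(a,u) = f^{-1}u$, the lens is a bijective-on-objects discrete opfibration, and it is invertible in $\Lens$; applied to $k$, this finishes the proof in one line. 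Your route instead factors iso through mono $+$ epi, uses Proposition~\ref{proposition:reflect-monos} and Corollary~\ref{corollary:reflect-epics} to reflect these along $\U$, and then invokes balancedness (Corollary~\ref{corollary:balanced}). This is valid, since all three results are stated in the paper, but it carries a heavier dependency chain than necessary: balancedness rests on the \emph{necessity} directions of Proposition~\ref{proposition:lens-mono} and Proposition~\ref{proposition:lens-epi}, which the paper imports from Di Meglio \cite{MattACT21}, whereas iso-reflection is elementary and self-contained. Also, your stated worry --- that an isomorphism ``cannot be pulled back directly'' because $\U$ is neither full nor faithful --- is overly cautious: for isomorphisms specifically it can be, and that is exactly what the paper's isofibration remark (together with uniqueness of lens structures over isomorphisms) encodes. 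So both arguments work; the paper's is shorter, yours is more explicit but leans on stronger characterisation results.
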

\begin{proof}
We need to show that for all categories $A$, $B$, and $C$, the 
canonical lens,
\begin{equation*}
[1 \times \iota_{B}, 1 \times \iota_{C}] 
\colon (A \times B) + (A \times C) \br A \times (B + C) 
\end{equation*}
is an isomorphism, where $\iota_{B} \colon B \br B + C$ and 
$\iota_{C} \colon C \br B + C$ are the coproduct injections. 
Since $\Cat$ is a distributive cartesian monoidal category, 
and the forgetful functor $\U \colon \Lens \rightarrow \Cat$ 
is a strong monoidal isofibration by 
Corollary~\ref{corollary:monoidal-structure}, the result follows 
immediately. 
\end{proof}

\subsection*{Imported pullbacks and extensivity} 

We now turn our attention to imported pullbacks, one of the primary 
motivations for this paper. 

\begin{proposition}
\label{proposition:imported-pullback}
The category $\Lens$ has all imported pullbacks along the forgetful
functor to $\Cat$. 
\end{proposition}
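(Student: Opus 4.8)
The plan is to take a cospan of lenses $(f, \varphi) \colon A \br C$ and $(g, \gamma) \colon B \br C$, form the pullback $A \times_{C} B$ of the underlying functors in $\Cat$, and then equip each projection with a canonical lens structure so that the resulting cone commutes in $\Lens$; by Definition~\ref{definition:imported-limit} this is precisely what an imported pullback requires.

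First I would build the lens structure on $\pi_{A} \colon A \times_{C} B \rightarrow A$ using the lifting operation of the \emph{other} leg. Representing $(g, \gamma)$ by its triangle $Y \xrightarrow{\gamma} B$, $Y \xrightarrow{\gammabar} C$ as in Proposition~\ref{proposition:triangle-rep}, I would pull back the discrete opfibration $\gammabar \colon Y \rightarrow C$ along $f \colon A \rightarrow C$. Since discrete opfibrations are stable under pullback (as recalled above), the projection $A \times_{C} Y \rightarrow A$ is again a discrete opfibration, and $\gamma$ induces a faithful, bijective-on-objects functor $A \times_{C} Y \rightarrow A \times_{C} B$ commuting over $A$. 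By Proposition~\ref{proposition:triangle-rep} and its converse, this triangle defines a lens structure on $\pi_{A}$; explicitly its lifting operation sends a pair $((a,b), u \colon a \rightarrow a')$ to $(u, \gamma(b, fu))$, transporting $fu \colon gb \rightarrow fa'$ back into $B$ via the lift of $(g,\gamma)$. Symmetrically, I would use $(f, \varphi)$ to equip $\pi_{B}$ with the lift $((a,b), v \colon b \rightarrow b') \mapsto (\varphi(a, gv), v)$. Verifying the three axioms of Definition~\ref{definition:delta-lens} for these operations is routine and reduces immediately to the corresponding axioms for $\gamma$ and $\varphi$.

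The main point — and the step I expect to carry the genuine lens-theoretic content — is checking that these two projection lenses form a cone over the cospan in $\Lens$, not merely in $\Cat$. On underlying functors $f \circ \pi_{A} = g \circ \pi_{B}$ holds by construction of the pullback. For the lifting operations I would compute both composite lenses $A \times_{C} B \br C$ at a pair $((a,b), w \colon fa \rightarrow c)$, noting $fa = gb$: the composite $(f, \varphi) \circ \pi_{A}$ yields $(\varphi(a,w), \gamma(b, f\varphi(a,w)))$, while $(g, \gamma) \circ \pi_{B}$ yields $(\varphi(a, g\gamma(b,w)), \gamma(b,w))$. Invoking the first axiom of Definition~\ref{definition:delta-lens} for each lens, namely $f\varphi(a,w) = w$ and $g\gamma(b,w) = w$, both expressions collapse to the same morphism $(\varphi(a,w), \gamma(b,w))$, so the cone commutes in $\Lens$. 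This symmetric, canonically determined cone is the desired imported pullback, completing the argument.
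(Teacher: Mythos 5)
Your proposal is correct and takes essentially the same route as the paper: both equip the pullback projection with a lens structure by pulling back the discrete opfibration leg of the opposite lens's triangle representation (Proposition~\ref{proposition:triangle-rep}), using stability of discrete opfibrations under pullback, and both arrive at the same explicit lifting operation $(u, \gamma(b, fu))$. The only difference is that you spell out the computation showing the cone commutes in $\Lens$, a step the paper merely asserts.
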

\begin{proof}
Given a cospan of lenses represented as commutative diagrams, 
\begin{equation}
\label{equation:cospan-lenses}
\begin{tikzcd}[column sep=small, row sep=small]
& X
\arrow[ld, "\varphi"']
\arrow[rd, "\phibar"]
& & Y
\arrow[ld, "\gammabar"']
\arrow[rd, "\gamma"]
& \\
A 
\arrow[rr, "f"']
& & B
& & C
\arrow[ll, "g"]
\end{tikzcd}
\end{equation}
we need to show that the pullback projection functors (for example, 
$\pi_{0} \colon A \times_{B} C \rightarrow A$) have a canonical 
lens structure such that $f \circ \pi_{0} = g \circ \pi_{1}$ in $\Lens$. 
Following Proposition~\ref{proposition:triangle-rep}, the lens structure
on the projection functor may be represented by the following diagram
in $\Cat$, 
\begin{equation}
\label{equation:imported-pullback}
\begin{tikzcd}[column sep=small, row sep=small]
& A \times_{B} Y
\arrow[ld, "1 \times \gammabar"']
\arrow[rd, "\pi_{0}"]
& \\
A \times_{B} C
\arrow[rr, "\pi_{0}"'] 
& & 
A
\end{tikzcd}
\end{equation}
where $A \times_{B} Y$ is the pullback of $f$ along $\gammabar$. 
More explicitly, the lifting operation on $\pi_{0}$ is given by:
\begin{equation*}
	((a, c) \in A \times_{B} C, u \colon a \rightarrow a' \in A) 
	\qquad \longmapsto \qquad
	(u, \gamma(c, u)) 
\end{equation*}
Moreover the projection lenses defined above make the appropriate square 
in $\Lens$ commute.
\end{proof}

\begin{example}[Pullbacks as independent components of a state machine]
Consider two state machines $A$ and $C$ over the same interface $B$, 
as lenses $A \br B$ and $C \br B$. 
The imported pullback lens $A \times_{B} C \br B$ models a 
state-machine where the states are pairs $(a \in A, c \in C)$; 
it can be viewed as a state machine with two independent components
$A$ and $C$, which concurrently update according to inputs from 
interface $B$.
\end{example}

There is a close relationship between imported products and 
imported pullbacks. 

\begin{proposition} 
\label{corollary:pullback-over-terminal}
Imported pullbacks over the terminal category correspond to imported products. 
\end{proposition}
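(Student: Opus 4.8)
The plan is to exhibit the imported product of a pair of categories $A$ and $C$ as a special case of the imported pullback construction of Proposition~\ref{proposition:imported-pullback}, taking as cospan the two unique lenses $A \br 1$ and $C \br 1$ into the terminal category $1$, each carrying its trivial lifting operation as in Proposition~\ref{proposition:terminal-object}. Since in $\Cat$ the pullback of $A \rightarrow 1 \leftarrow C$ is exactly the product $A \times C$, the imported pullback is carried by the same underlying category as the imported product of Proposition~\ref{proposition:imported-product}. The only real content of the statement is therefore that the two constructions equip the projections with identical lens structures.

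To verify this I would compare the two canonical lens structures via their diagrammatic representations. By Proposition~\ref{proposition:terminal-object}, the terminal lens $C \br 1$ is represented by the inclusion $i \colon C_{0} \rightarrow C$ of the discrete category of objects together with the unique functor $C_{0} \rightarrow 1$; these are precisely the data $\gamma$ and $\gammabar$ occurring on the right-hand side of the cospan~\eqref{equation:cospan-lenses}. Substituting $Y = C_{0}$, $B = 1$, and $\gamma = i$ into the imported-pullback representation~\eqref{equation:imported-pullback}, and using that $A \times_{1} C_{0} = A \times C_{0}$ and $A \times_{1} C = A \times C$, that diagram collapses to
\begin{equation*}
\begin{tikzcd}[column sep=small, row sep=small]
& A \times C_{0}
\arrow[ld, "1 \times i"']
\arrow[rd, "\pi_{0}"]
& \\
A \times C
\arrow[rr, "\pi_{0}"']
& &
A
\end{tikzcd}
\end{equation*}
which is verbatim the imported-product representation~\eqref{equation:imported-product}. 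Equivalently, and perhaps more transparently, I would argue directly on lifting operations: the imported-pullback lift is $((a, c), u) \mapsto (u, \gamma(c, u))$, where $\gamma$ is the lift of $C \br 1$; since the only morphism of $1$ is an identity, axiom~(2) of Definition~\ref{definition:delta-lens} forces $\gamma(c, -) = 1_{c}$, so the lift reduces to $((a, c), u) \mapsto (u, 1_{c})$, which is exactly the imported-product lift of Proposition~\ref{proposition:imported-product}.

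Finally I would record that the identical comparison applies to the second projection $\pi_{1}$, where the trivial lift of $A \br 1$ reduces the imported-pullback lift $((a, c), v) \mapsto (\varphi(a, v), v)$ to $((a, c), v) \mapsto (1_{a}, v)$, again matching the imported product. Hence the entire canonical cone produced by the imported pullback over $1$ coincides with the canonical cone of the imported product. I do not expect a substantive obstacle: once the terminal lens is put in the normal form of Proposition~\ref{proposition:terminal-object} the verification is essentially bookkeeping. The single point deserving care is conceptual rather than computational, namely that the word \emph{correspond} must be read in the sense of Definition~\ref{definition:imported-limit}, as an identification of the canonical cones (underlying category together with the chosen lens structures on all projections) and not merely an isomorphism of underlying categories; the diagrammatic collapse above makes this identification manifest.
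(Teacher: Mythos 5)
Your proof is correct: the paper states this proposition without any proof, treating it as immediate, and your argument---specialising the cospan \eqref{equation:cospan-lenses} to the two terminal lenses of Proposition~\ref{proposition:terminal-object}, so that the representation \eqref{equation:imported-pullback} collapses verbatim to \eqref{equation:imported-product}, and checking on lifting operations that $(u, \gamma(c,u))$ reduces to $(u, 1_{c})$ by axiom~(2), and symmetrically for $\pi_{1}$---is exactly the verification the authors leave implicit. Your closing observation that \emph{correspond} must mean an identification of the canonical cones (lens structures included), and not merely of the underlying categories, is also the correct reading of Definition~\ref{definition:imported-limit}.
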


We also have the following result, which generalises 
Corollary~\ref{corollary:monoidal-structure}.

\begin{corollary}
\label{corollary:slice-monoidal}
For each category $B$, the category $\Lens / B$ has a semi-cartesian
monoidal structure given by imported pullback, 
and the forgetful functor $\U / B \colon \Lens / B \rightarrow \Cat / B$ 
is strong monoidal. 
\end{corollary}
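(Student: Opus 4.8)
The plan is to lift the cartesian monoidal structure of $\Cat/B$ to $\Lens/B$ along $\U/B$, taking the tensor product to be the imported pullback. Recall first that $\Cat/B$ is cartesian monoidal: its binary products are pullbacks over $B$, and its terminal object --- hence monoidal unit --- is the identity functor $1_{B} \colon B \to B$. By Proposition~\ref{proposition:imported-pullback}, the pullback over $B$ of two objects $f \colon A \br B$ and $g \colon C \br B$ of $\Lens/B$ carries a canonical lens structure $f \boxtimes g \colon A \times_{B} C \br B$, given on an object $(a,c)$ (with $fa = gc$) and a morphism $w \colon fa \to b'$ by the componentwise lift $((a,c), w) \mapsto (\varphi(a, w), \gamma(c, w))$. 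This yields a candidate tensor on objects of $\Lens/B$ lying strictly over the product of $\Cat/B$. I would first check that $1_{B} \colon B \br B$ serves as unit (the pullback of $f$ along $1_{B}$ is $A$, and $f \boxtimes 1_{B} \cong f$), and observe that $1_{B}$ is the terminal object of $\Lens/B$, since the only morphism $p \colon A \br B$ admits to $1_{B}$ is $p$ itself; once $1_{B}$ is the unit, the structure is automatically semi-cartesian.

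Next I would promote $\boxtimes$ to a bifunctor. On a pair of morphisms of $\Lens/B$ it acts by the induced map of pullbacks in $\Cat/B$; using the explicit componentwise lift formula above, one checks routinely that this induced functor is a lens morphism over $B$, and that $\boxtimes$ respects identities and composition. For the coherence data I would take the associator, unitors, and symmetry to be exactly the corresponding coherence isomorphisms of the cartesian structure on $\Cat/B$. Each of these is an isomorphism of categories over $B$, hence an isomorphism in $\Lens/B$ (by Corollary~\ref{corollary:balanced}, every bijective-on-objects discrete opfibration is invertible); and since these isomorphisms merely reassociate, permute, or adjoin unit components of tuples, the componentwise lift formula shows at once that they commute with the tensor lens structures, so they are genuine morphisms of $\Lens/B$.

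It then remains to verify the pentagon, triangle, and hexagon axioms. Here I would exploit that each axiom is an equation between two lenses: their underlying functors agree because the axioms already hold for the cartesian structure on $\Cat/B$, and their lifting operations agree because both are computed by the same componentwise formula. Two lenses sharing an underlying functor and a lifting operation coincide, so each axiom holds in $\Lens/B$. This step is where I expect the only real subtlety to lie: because $\U$ is not faithful, coherence does \emph{not} transport for free along $\U/B$, and it is precisely the explicit componentwise description of the imported-pullback lift that rescues the argument by pinning down the lifting operations on both sides.

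Finally, strong monoidality of $\U/B \colon \Lens/B \to \Cat/B$ is immediate: the tensor and unit were constructed to lie strictly over the product and terminal object of $\Cat/B$, and the chosen coherence isomorphisms map to those of $\Cat/B$ under $\U/B$, so the comparison constraints may be taken to be identities. Since $\U/B$ inherits from $\U$ the property of being an isofibration (isofibrations are stable under slicing), this exhibits $\U/B$ as a strong monoidal isofibration, generalising Corollary~\ref{corollary:monoidal-structure}, which is recovered as the case $B = 1$ via Proposition~\ref{corollary:pullback-over-terminal}.
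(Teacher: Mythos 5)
Your proposal is correct and takes exactly the route the paper intends: the corollary is stated there without proof as a direct consequence of Proposition~\ref{proposition:imported-pullback}, and your componentwise lift $((a,c),w) \mapsto (\varphi(a,w), \gamma(c,w))$ is precisely the lens structure obtained by composing the canonical pullback-projection lens with the structure lens to $B$, with $1_{B}$ terminal in $\Lens/B$ giving semi-cartesianness and strictness over $\Cat/B$ giving strong monoidality. The one streamlining worth noting is that your flagged subtle step (the pentagon, triangle, and hexagon equations) follows immediately from the fact that every coherence morphism is an isomorphism, hence a discrete opfibration, which admits a unique lens structure --- so agreement of the underlying functors in $\Cat/B$ already forces agreement of the lenses, with no comparison of lifting formulas required.
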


As with imported products, it is again natural to ask when the 
imported pullback in $\Lens$ coincides with the categorical pullback 
in $\Lens$, leading to the following result which generalises
Proposition~\ref{proposition:product-discrete}.

\begin{proposition}
\label{proposition:pullback-discrete}
The imported pullback $A \times_{B} C$ of the cospan 
\eqref{equation:cospan-lenses} in $\Lens$ corresponds with the 
categorical pullback in $\Lens$ if $f \colon A \rightarrow B$ or 
$g \colon C \rightarrow B$ is a discrete opfibration.
\end{proposition}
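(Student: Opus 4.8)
The plan is to mirror the proof of Proposition~\ref{proposition:product-discrete}, combining Lemma~\ref{lemma:division}(ii) with the stability of discrete opfibrations under pullback. I would treat the case where $g \colon C \to B$ is a discrete opfibration; the case where $f$ is a discrete opfibration is symmetric, swapping the roles of the two projections.

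First, since $\pi_{0} \colon A \times_{B} C \to A$ is the pullback of $g$ along $f$, and discrete opfibrations are stable under pullback, the projection $\pi_{0}$ is a discrete opfibration. Now I would take an arbitrary competing cone in $\Lens$, namely lenses $(p, \alpha) \colon D \br A$ and $(q, \beta) \colon D \br C$ with $f \circ p = g \circ q$ in $\Lens$. The universal property of the pullback in $\Cat$ supplies a unique functor $h \colon D \to A \times_{B} C$ satisfying $\pi_{0} \circ h = p$ and $\pi_{1} \circ h = q$. Because $\pi_{0}$ is a discrete opfibration and the composite $\pi_{0} \circ h = p$ carries a lens structure, Lemma~\ref{lemma:division}(ii) equips $h$ with a unique lens structure for which $\pi_{0} \circ h = p$ holds in $\Lens$.

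The crux is to verify that this same lens structure also makes $\pi_{1} \circ h = q$ commute in $\Lens$ — the step that the analogue Proposition~\ref{proposition:product-discrete} asserts without elaboration, and where the hypothesis on $g$ is genuinely used. Rather than compute lifting operations directly, I would argue by uniqueness. Both $\pi_{1} \circ h$ and $q$ are lens structures on the single underlying functor $q \colon D \to C$; since $g$ is a discrete opfibration, the uniqueness clause of Lemma~\ref{lemma:division}(ii) reduces the desired equality to checking that $g \circ (\pi_{1} \circ h) = g \circ q$ as lenses $D \br B$. Using associativity of lens composition, the commutativity of the imported-pullback square $f \circ \pi_{0} = g \circ \pi_{1}$ in $\Lens$ from Proposition~\ref{proposition:imported-pullback}, and the identity $\pi_{0} \circ h = p$, one finds
\[
g \circ (\pi_{1} \circ h) = (g \circ \pi_{1}) \circ h = (f \circ \pi_{0}) \circ h = f \circ p = g \circ q,
\]
where the final equality is the cone compatibility. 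Hence $\pi_{1} \circ h = q$ in $\Lens$.

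Finally, uniqueness of the mediating lens comes cheaply: any lens $D \br A \times_{B} C$ commuting with both projections must in particular commute with $\pi_{0}$, so its underlying functor is forced to be $h$ by the $\Cat$ universal property, and its lens structure is then forced by Lemma~\ref{lemma:division}(ii). Thus $A \times_{B} C$ has the universal property of the pullback in $\Lens$. I expect the main obstacle to be exactly the middle paragraph — confirming that the structure forced by the first projection automatically respects the second — and the device of recasting that check as an equality of composites with the discrete opfibration $g$ is what lets me sidestep an explicit manipulation of lifting operations.
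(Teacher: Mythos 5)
Your proposal is correct and follows essentially the same route as the paper: both deduce that $\pi_{0}$ is a discrete opfibration from stability of discrete opfibrations under pullback, then invoke Lemma~\ref{lemma:division}(ii) to transfer the universal property from $\Cat$ to $\Lens$, exactly mirroring Proposition~\ref{proposition:product-discrete}. The only difference is that where the paper dismisses the compatibility of the induced lens structure with the second projection as ``straightforward,'' you actually prove it, via a tidy uniqueness argument reducing $\pi_{1} \circ h = q$ to the lens equation $g \circ \pi_{1} \circ h = g \circ q$, which follows from the commutativity in $\Lens$ of the imported-pullback square established in Proposition~\ref{proposition:imported-pullback}.
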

\begin{proof}
Suppose $g \colon C \rightarrow B$ in the cospan 
\eqref{equation:cospan-lenses} is a discrete opfibration. 
Since discrete opfibrations are stable under pullback, 
the pullback projection \eqref{equation:imported-pullback} is 
a discrete opfibration. 
Then using Lemma~\ref{lemma:division}, it is straightforward to show
using an analogous argument to the proof of 
Proposition~\ref{proposition:product-discrete} that $A \times_{B} C$
has the universal property of the pullback in $\Lens$. 
\end{proof}

\begin{remark}
It is natural to wonder if all pullbacks in $\Lens$ are of the kind 
described in Proposition~\ref{proposition:pullback-discrete}. 
There are examples where pullbacks exist along lenses which are not 
discrete opfibrations; however the details are outside the scope of
this paper. 
\end{remark}

We are now able to prove the main theorem of this section.

\begin{theorem}
\label{theorem:extensive}
The category $\Lens$ is extensive. 
\end{theorem}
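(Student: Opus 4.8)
A category with finite coproducts is extensive when, for every pair of objects $B$ and $C$, the canonical functor from the product of slice categories $\Lens/B \times \Lens/C$ to the slice $\Lens/(B+C)$ is an equivalence. Concretely, this amounts to two conditions: coproducts are disjoint (the injections are monomorphisms and their pullback is initial), and coproducts are stable under pullback, in the precise sense that a square over a coproduct injection is a pullback if and only if the two component squares are pullbacks. The plan is to verify the standard characterisation: given any object $D \br B + C$ and the two pullback squares formed along the injections $\iota_B \colon B \br B+C$ and $\iota_C \colon C \br B+C$, the object $D$ is the coproduct of the two pullbacks.

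**Leverage the forgetful functor and the injection structure.** The crucial observation is that the coproduct injections $\iota_B$ and $\iota_C$ are cosieves (injective-on-objects discrete opfibrations), as established in the proof of Proposition~\ref{proposition:coproducts}. This means the pullbacks we need are exactly the kind handled by Proposition~\ref{proposition:pullback-discrete}: pullbacks in $\Lens$ along a discrete opfibration exist and coincide with the imported pullback, which $\U$ sends to the genuine pullback in $\Cat$. First I would use this to reduce the two pullback squares in $\Lens$ to the corresponding pullback squares in $\Cat$, where extensivity of $\Cat$ is already known. Given $(d, \delta) \colon D \br B + C$, the underlying functor decomposes $D$ in $\Cat$ as a coproduct $D \cong D_B + D_C$, where $D_B = \U(d)^{-1}(B)$ and $D_C = \U(d)^{-1}(C)$ are the preimages of the two summands; this is precisely extensivity in $\Cat$.

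**Lift the decomposition to $\Lens$.** The remaining work is to check that this coproduct decomposition of the underlying functor is in fact a coproduct decomposition in $\Lens$, and that the induced maps $D_B \br D$ and $D_C \br D$ carry the correct lens structures compatible with the pullback projections. Here I would argue that the inclusions $D_B \rightarrow D$ and $D_C \rightarrow D$ are cosieves in $\Cat$ (each summand of a coproduct includes as an injective-on-objects discrete opfibration), hence have unique lens structures, and that $D$ with these maps satisfies the coproduct universal property in $\Lens$ by the same reasoning as Proposition~\ref{proposition:coproducts}, which built coproducts in $\Lens$ from the triangle representation \eqref{equation:triangle-rep}. Disjointness follows because the pullback of $\iota_B$ and $\iota_C$ is computed in $\Cat$ (both are discrete opfibrations, so Proposition~\ref{proposition:pullback-discrete} applies) and is the initial object $0$, which is the initial object of $\Lens$ by Proposition~\ref{proposition:initial-object}.

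**The main obstacle.** I expect the delicate point to be verifying compatibility of the lens structures rather than the objectwise decomposition, which is essentially inherited from $\Cat$. Specifically, one must confirm that the lens structure on $(d, \delta)$ restricts correctly to each summand, i.e.~that the lifting operation $\delta$ respects the partition $\Ob(D) = \Ob(D_B) \sqcup \Ob(D_C)$. This is where the discrete-opfibration nature of the injections does the real work: since lifting along $d$ of a morphism in $B + C$ stays within a single summand (a morphism of $B+C$ lies entirely in $B$ or entirely in $C$, as there are no arrows between the summands), the lift never crosses between $D_B$ and $D_C$, so $\delta$ genuinely restricts. I would make this precise by appealing to the uniqueness clause of Lemma~\ref{lemma:division} to identify the restricted lens structures, rather than recomputing lifts by hand. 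Once this compatibility is established, the equivalence $\Lens/B \times \Lens/C \simeq \Lens/(B+C)$ follows formally, and extensivity is proved.
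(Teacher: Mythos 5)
Your proposal is correct and follows essentially the same route as the paper: both obtain finite coproducts from Proposition~\ref{proposition:coproducts}, get pullbacks along coproduct injections from Proposition~\ref{proposition:pullback-discrete} (since injections are discrete opfibrations), and then reduce the extensivity condition to the known extensivity of $\Cat$ via the forgetful functor $\U$. The only difference is one of detail: the paper compresses the final step into ``follows directly, since $\Cat$ is extensive and $\U$ is an identity-on-objects isofibration,'' whereas you explicitly carry out the lifting of the $\Cat$-level decomposition back to $\Lens$ using the uniqueness of lens structures on cosieves (Lemma~\ref{lemma:division}), which is precisely the content that claim relies on.
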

\begin{proof}
By Proposition~\ref{proposition:coproducts}, the category $\Lens$
has finite coproducts. 
By Proposition~\ref{proposition:pullback-discrete}, the category 
$\Lens$ has pullbacks along discrete opfibrations, hence pullbacks 
along coproduct injections. 
Moreover, given any commutative diagram in $\Lens$ of the form, 
\begin{equation}
\label{equation:extensive}
\begin{tikzcd}[column sep=small, row sep=small]
X
\arrow[d]
\arrow[r]
& Z
\arrow[d]
& Y
\arrow[l]
\arrow[d]
\\
A
\arrow[r, "\iota_{A}"']
& A + B
& B
\arrow[l, "\iota_{B}"]
\end{tikzcd}
\end{equation}
the statement that the two squares are pullbacks if and 
only if the top row is a coproduct diagram follows directly, 
since $\Cat$ is extensive and the functor 
$\U \colon \Lens \rightarrow \Cat$ is an identity-on-objects 
isofibration. 
\end{proof}

\section{Conclusion}
This paper has begun the study of the category $\Lens$ whose morphisms 
are lenses between small categories. 
We have presented results about limits, about some imported limits, and 
about coproducts, along with aspects of their interaction including 
extensivity. 
The work has continued apace with important findings by Di Meglio 
\cite{MattACT21} who studies further colimits in $\Lens$. 

The results have been surprising because the category of lenses, which 
is practically important but seemed rather ad hoc, turns out to have many 
aspects which are simpler than $\Cat$, and some aspects which are 
surprisingly like the category of sets. 
In many respects imported limits interact well with one another, and 
with real limits and colimits.

So far we have only studied one category of lenses, but there are many 
more, including ($2$-)categories whose morphisms are symmetric lenses, 
split opfibrations, and discrete opfibrations. 
Future work aims to explore these categories and their interactions with 
$\Lens$, and to further clarify the role played by identity-on-objects 
isofibrations and limits and colimits imported along them.

\bibliographystyle{eptcs}
\bibliography{ACT2021-references.bib}

\end{document}